\documentclass[a4paper, 10pt, twoside, notitlepage]{amsart}

\usepackage{amsmath,amscd}
\usepackage{amssymb}
\usepackage{amsthm}
\usepackage{comment}
\usepackage{graphicx}

\usepackage{mathrsfs}
\usepackage[ocgcolorlinks, linkcolor=blue]{hyperref}

\usepackage[ocgcolorlinks,linkcolor=blue]{hyperref}


\theoremstyle{plain}
\newtheorem{thm}{Theorem}[section]
\newtheorem{prop}{Proposition}[section]
\newtheorem{lem}[prop]{Lemma}
\newtheorem{cor}[prop]{Corollary}

\newtheorem{rmk}[prop]{Remark}

\numberwithin{equation}{section}
\newcommand {\R} {\mathbb{R}} 
 \newcommand {\N} {\mathbb{N}}
\newcommand {\C} {\mathbb{C}} 
\newcommand {\p} {\partial}

\newcommand{\eps}{\epsilon}

\newcommand{\ol}{\overline}

\newcommand{\re}{\mathrm{Re}}
\newcommand{\im}{\mathrm{Im}}

\newcommand{\abs}[1]{\lvert #1 \rvert}          
\newcommand{\norm}[1]{\lVert #1 \rVert}         

\pagestyle{headings}

\title[]{Partial data inverse problems and simultaneous recovery of boundary and coefficients for semilinear elliptic equations}

\author[Lassas]{Matti Lassas}
\address{Department of Mathematics and Statistics, University of Helsinki}
\curraddr{}
\email{matti.lassas@helsinki.fi}

\author[Liimatainen]{Tony Liimatainen}
\address{Department of Mathematics and Statistics, University of Jyv\"askyl\"a}
\curraddr{}
\email{tony.liimatainen@helsinki.fi}

\author[Lin]{Yi-Hsuan Lin}
\address{Department of Mathematics and Statistics, University of Jyv\"askyl\"a}
\curraddr{}
\email{yihsuanlin3@gmail.com}

\author[Salo]{Mikko Salo}
\address{Department of Mathematics and Statistics, University of Jyv\"askyl\"a}
\curraddr{}
\email{mikko.j.salo@jyu.fi}

\begin{document}
	
	\maketitle
	
	\begin{abstract}
	We study various partial data inverse boundary value problems for the semilinear elliptic equation $\Delta u+ a(x,u)=0$ in a domain in $\R^n$ by using the higher order linearization technique introduced in~\cite{lassas2019nonlinear, FO2019semilinear}. We show that the Dirichlet-to-Neumann map of the above equation determines the Taylor series of $a(x,z)$ at $z=0$ under general assumptions on $a(x,z)$. The determination of the Taylor series can be done in parallel with the detection of an unknown cavity inside the domain or an unknown part of the boundary of the domain. The method relies on the solution of the linearized partial data Calder\'on problem \cite{ferreira2009linearized}, and implies the solution of partial data problems for certain semilinear equations $\Delta u+ a(x,u) = 0$ also proved in \cite{KU2019partial}.

		The results that we prove are in contrast to the analogous inverse problems for the linear Schr\"odinger equation. There recovering an unknown cavity (or part of the boundary) and the potential simultaneously are long-standing open problems, and the solution to the Calder\'on problem with partial data is known only in special cases when $n \geq 3$. 

		\medskip
		
		\noindent{\bf Keywords.} Calder\'on problem, inverse obstacle problem, Schiffer's problem, simultaneous recovery, partial data.
		
		
	\end{abstract}

	\tableofcontents

	\section{Introduction}
	
In this paper, we extend the recent studies \cite{lassas2019nonlinear, FO2019semilinear} to various partial data inverse problems for the semilinear elliptic equation 
\[
\Delta u +a(x,u)=0 \text{ in }\Omega\subset \R^n,
\] 
for $n\geq 2$.
The proofs rely on \emph{higher order linearization}. This method reduces inverse problems for  semilinear elliptic equations to related problems for the Laplace equation, with artificial source terms produced by the nonlinear interaction, and then employs the exponential solutions introduced in \cite{calderon2006inverse} to solve these problems. Hence, one can regard the nonlinearity as a tool to solve inverse problems for elliptic equations with certain nonlinearities.

As a matter of fact, many researchers have studied inverse problems for nonlinear elliptic equations. A classical method, introduced in \cite{isakov1993uniqueness} in the parabolic case, is to show that the first linearization of the nonlinear DN map is actually the DN map of the corresponding linearized equation, and then to adapt the theory of inverse problems for linear equations. For the semilinear equation $\Delta u +a(x,u)=0$, the problem of recovering the potential $a(x,u)$ was studied in \cite{isakov1994global, VictorN, sun2010inverse, imanuvilovyamamoto_semilinear}. Further results are available for inverse problems for quasilinear elliptic equations \cite{sun1996, sun1997inverse, kang2002identification, liwang_navierstokes, munozuhlmann}, for the degenerate elliptic $p$-Laplace equation \cite{salo2012inverse, branderetal_monotonicity_plaplace}, and for the fractional semilinear Schr\"odinger equation \cite{lai2019global}. Certain inverse problems for quasilinear elliptic equations on Riemannian manifolds were considered in~\cite{lassas2018poisson}. We refer to the surveys \cite{sun2005, uhlmann2009electrical} for more details on inverse problems for nonlinear elliptic equations. 

Inverse problems for hyperbolic equations with various nonlinearities have also been studied.  Many of the results mentioned above rely on a solution to a related inverse problem for a linear equation, which is in contrast to the study of inverse problems for nonlinear hyperbolic equations. In fact, it has been realized that the nonlinearity can be beneficial in solving inverse problems for nonlinear hyperbolic equations. 

By regarding the nonlinearity as a tool, some unsolved inverse problems for hyperbolic linear equations have been solved for their nonlinear analogues. Kurylev-Lassas-Uhlmann \cite{kurylev2018inverse} studied the scalar wave equation with a quadratic nonlinearity.
In \cite{lassas2018inverse}, the authors studied inverse problems for general semilinear wave equations on Lorentzian manifolds, and in \cite{lassas2017determination} they studied similar problems for the Einstein-Maxwell equations. We also refer readers to \cite{chen2019detection,de2018nonlinear,kurylev2014einstein,wang2016quadartic} and references therein for further results on inverse problems of nonlinear hyperbolic equations.

In this work we employ the method introduced independently in \cite{lassas2019nonlinear} and \cite{FO2019semilinear} which uses nonlinearity as a tool that helps in solving inverse problems for certain nonlinear elliptic equations. The method is based on \emph{higher order linearizations} of the DN map, and essentially amounts to using sources with several parameters and obtaining new linearized equations after differentiating with respect to these parameters. The works \cite{lassas2019nonlinear, FO2019semilinear} considered inverse problems with boundary measurements on the whole boundary, also on manifolds of certain type. In this article we will consider similar problems in Euclidean domains when the data is given only on part of the boundary, or when the domain includes an unknown cavity or an unknown part of the boundary. Moreover, just before this article was submitted to arXiv, the preprint \cite{KU2019partial} of Krupchyk and Uhlmann appeared on arXiv. The work \cite{KU2019partial} considers the partial data Calder\'on problem for certain semilinear equations and proves Corollary \ref{corollary_partial_data} below.



Let us describe more precisely the semilinear equations studied in this article. Let $\Omega \subset \R^n$ be a bounded domain with $C^\infty$ boundary $\p \Omega$, where $n\geq 2$.
Consider the following second order semilinear elliptic equation
\begin{align}\label{main equation_general}
	\begin{cases}
	\Delta u + a(x,u)=0 & \text{ in }\Omega, \\
	u=f & \text{ on }\p \Omega.
	\end{cases}
\end{align}
We will assume that the boundary data satisfies $f \in C^s(\p \Omega)$ and $\norm{f}_{C^s(\p \Omega)}\leq \delta$, where $s > 1$ is not an integer and $\delta>0$ is a sufficiently small number. The space $C^s$ is the classical H\"older space. For the function $a=a(x,z)$, we assume that $a$ is $C^{\infty}$ in $\ol{\Omega} \times \R$ and satisfies one of the following conditions:

Either $a=a(x,z)$ satisfies 
\begin{align}\label{condition of a 1}
	a(x,0)=0, \text{ and } 0 \text{ is not a Dirichlet eigenvalue of }\Delta + \p_za(x,0)\text{ in }\Omega,
\end{align}
or $a=a(x,z)$ satisfies 
\begin{align}\label{condition of a}
	a(x,0)=\p_z a(x,0)=0.
\end{align}
Note that the condition \eqref{condition of a} is stronger than \eqref{condition of a 1}. Nonlinearities satisfying \eqref{condition of a} together with the condition $\p_z^k a(x,0)\neq 0$ for some $k\geq 2$ are called \emph{genuinely nonlinear} in \cite{lassas2018inverse} in the context of inverse problems of nonlinear hyperbolic equations. The benefit of assuming \eqref{condition of a} is that the linearized equation will be just the Laplace equation.

For nonlinearities satisfying \eqref{condition of a 1}, it follows from \cite[Proposition 2.1]{lassas2019nonlinear} that the boundary value problem \eqref{main equation_general} is well-posed for small boundary data $f\in C^s(\p \Omega)$. Hence, we can find a unique small solution $u$ of \eqref{main equation_general} and directly define the corresponding \emph{Dirichlet-to-Neumann} map (DN map) $\Lambda_a$ such that 
\[
\Lambda_a:C^s (\p \Omega)\to C^{s-1}(\p \Omega), \quad \Lambda_a: f\mapsto \p _\nu u|_{\p \Omega},
\]
where $\p _\nu$ is the normal derivative on the boundary $\p \Omega$.

To set the stage, we first state the full data uniqueness result that follows from the method of \cite{lassas2019nonlinear, FO2019semilinear} for Euclidean domains. This is not covered by earlier results on inverse problems for semilinear equations \cite{isakov1994global, VictorN, sun2010inverse, imanuvilovyamamoto_semilinear}, which often assume a sign condition such as $\partial_u a(x,u)\le 0$. 

\begin{thm}[Global uniqueness]\label{thm uniqueness of a}
	Let $\Omega\subset \R^n$ be a bounded domain with $C^\infty$ boundary $\p \Omega$, where $n\geq 2$. Let $a_j(x,z)$ be $C^\infty$ functions in $x,z$ satisfying \eqref{condition of a 1} for $j=1,2$. Let $\Lambda_{a_j}$ be the DN maps of 
	\begin{align*}
		\Delta u +a_j (x,u)=0 \text{ in }\Omega,
	\end{align*}
	for $j=1,2$, and assume that 
	\[
	\Lambda_{a_1}(f)=\Lambda_{a_2}(f)
	\]
	for any $f\in C^s(\p \Omega)$ with $\norm{f}_{C^s(\p \Omega)}<\delta$, where $\delta>0$ is a sufficiently small number. Then we have
	\begin{align}\label{taylor_series_agree}
		\p^k_z a_1(x,0) = \p^k_z a_2 (x,0) \text{ in }\Omega ,\text{ for }k\geq 1.
	\end{align}
\end{thm}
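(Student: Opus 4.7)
The plan is to apply the \emph{higher order linearization} method of \cite{lassas2019nonlinear, FO2019semilinear}. The hypothesis \eqref{condition of a 1} ensures that $u \equiv 0$ is the unique small solution of \eqref{main equation_general} with zero boundary data, so one may feed in $f = \sum_{i=1}^{N} \epsilon_i h_i$ with small real parameters $\epsilon_i$ and test functions $h_i \in C^s(\p \Omega)$, and differentiate the resulting solutions $u_j^\epsilon$ (solving $\Delta u + a_j(x,u)=0$) in the $\epsilon_i$ at $\epsilon = 0$. The hypothesis $\Lambda_{a_1} = \Lambda_{a_2}$ then translates into matching Cauchy data for every such mixed partial derivative, and one recovers the Taylor coefficients $\partial_z^k a_j(x,0)$ by induction on $k$.

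For the base case $k=1$, set $f = \epsilon h$: the function $v := \partial_\epsilon u_j^\epsilon|_{\epsilon=0}$ solves $\Delta v + q_j v = 0$ in $\Omega$ with $v|_{\p\Omega} = h$, where $q_j := \partial_z a_j(\cdot,0)$. Differentiating $\Lambda_{a_1}(\epsilon h) = \Lambda_{a_2}(\epsilon h)$ at $\epsilon=0$ gives equality of the DN maps of the linear Schr\"odinger operators $\Delta + q_j$. The full-data Calder\'on problem for the linear Schr\"odinger equation (Sylvester--Uhlmann in $n\geq 3$, Bukhgeim in $n=2$) then yields $q_1 = q_2 =: q$.

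For the inductive step, assume $\partial_z^\ell a_1(\cdot,0) = \partial_z^\ell a_2(\cdot,0)$ for $1 \leq \ell \leq k-1$ and apply $\partial_{\epsilon_1}\cdots\partial_{\epsilon_k}|_{\epsilon=0}$ to the PDE. Using Fa\`a di Bruno's formula at $u_j^0 \equiv 0$, the mixed partial $w_j := \partial_{\epsilon_1}\cdots\partial_{\epsilon_k} u_j^\epsilon|_{\epsilon=0}$ satisfies
\[
(\Delta + q)\,w_j + \partial_z^k a_j(x,0)\,v_1 v_2 \cdots v_k + R_j = 0,
\]
where $v_i := \partial_{\epsilon_i} u_j^\epsilon|_{\epsilon=0}$ (independent of $j$ by the base case) and $R_j$ is the sum over partitions $\pi$ of $\{1,\dots,k\}$ with $2\leq |\pi|\leq k-1$ of terms $\partial_z^{|\pi|} a_j(\cdot,0)\prod_{B\in\pi} w_B^{(j)}$, in which the $w_B^{(j)}$ are mixed partials of order $|B| < k$. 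By the inductive hypothesis these lower-order partials solve identical PDEs with identical (zero) Dirichlet data for $j=1,2$, hence coincide, so $R_1 = R_2$. Since $f$ contains no $\epsilon_1\cdots\epsilon_k$ monomial, $w_1 - w_2 = 0$ on $\p\Omega$, and $\Lambda_{a_1}=\Lambda_{a_2}$ gives $\p_\nu(w_1 - w_2) = 0$ there as well. Multiplying the difference of the two PDEs by an arbitrary solution $v_{k+1}$ of $(\Delta + q)v_{k+1} = 0$ and integrating by parts cancels the boundary terms and the $(\Delta+q)$ contribution, leaving
\[
\int_\Omega \bigl(\partial_z^k a_1(x,0) - \partial_z^k a_2(x,0)\bigr)\, v_1 v_2 \cdots v_{k+1}\, dx = 0.
\]

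The main obstacle is the final density step: concluding $\partial_z^k a_1(\cdot,0) = \partial_z^k a_2(\cdot,0)$ from the vanishing of this integral for all admissible $v_i$ requires that products of $k+1$ solutions of $(\Delta+q)v=0$ form a total set in $L^1(\Omega)$. The standard route is through complex geometric optics solutions $v_i = e^{\rho_i \cdot x}(1 + \psi_i)$ with isotropic $\rho_i \in \C^n$, so that $\prod_i v_i$ behaves like $e^{(\sum_i \rho_i)\cdot x}$ to leading order and $\sum_i \rho_i$ can be arranged to realize any prescribed real Fourier frequency of $\partial_z^k a_1 - \partial_z^k a_2$. For $n \geq 3$ this is immediate from the Sylvester--Uhlmann construction; for $n=2$ the availability of $k+1 \geq 3$ independent factors provides sufficient flexibility to close the argument. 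Once density is established, the induction proceeds to all $k\geq 1$, proving \eqref{taylor_series_agree}.
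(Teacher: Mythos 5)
Your overall architecture is correct and matches the paper's: higher order linearization, induction on the order $k$, an integral identity obtained by multiplying by an auxiliary solution and integrating by parts, and a final density step. The base case $k=1$ via Sylvester--Uhlmann / Bukhgeim is also as in the paper. However, there is a genuine gap precisely at the step you yourself flag as ``the main obstacle'': you need
\[
\int_\Omega \bigl(\p_z^k a_1(x,0) - \p_z^k a_2(x,0)\bigr)\, v_1 \cdots v_{k+1}\,dx = 0 \quad \Longrightarrow \quad \p_z^k a_1(\cdot,0) = \p_z^k a_2(\cdot,0),
\]
and you propose to achieve this by taking all $k+1$ factors to be CGOs whose phases $\rho_i$ sum to a prescribed imaginary frequency. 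This is not a known density result, and your justification is too quick. Even for $n\geq 3$, passing to the limit in a product of $k+1\geq 3$ CGO factors $e^{\rho_i\cdot x}(1+\psi_i)$ requires $L^\infty$ (not merely $L^2$) control of the remainders $\psi_i$, which is a nontrivial refinement, and arranging $\sum_i\rho_i = 2i\xi$ with each $\rho_i$ isotropic, mutually compatible, and $|\rho_i|\to\infty$ needs to be checked. For $n=2$ the claim that ``the availability of $k+1\geq 3$ independent factors provides sufficient flexibility'' is simply unsupported: Bukhgeim's CGOs have quadratic phase and only the density of products of \emph{pairs} of such solutions is known; products of three or more do not obviously generate $L^1(\Omega)$.

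The paper avoids this entirely with a device you have not identified. Fix $x_0\in\Omega$ and construct a single auxiliary solution $v^{(0)}$ of $(\Delta+q)v^{(0)}=0$, smooth (e.g.\ in $H^s$ for $s$ large) and with $v^{(0)}(x_0)\neq 0$; this is done via Runge approximation (or, alternatively, as a single nonvanishing CGO with bounded remainder, or, when $q\leq 0$, via the maximum principle). Then in the integral identity take only two of the $k+1$ factors to be CGOs and set all the remaining ones equal to $v^{(0)}$. The known completeness of \emph{pairs} of CGO products now yields $\bigl(\p_z^k a_1(\cdot,0)-\p_z^k a_2(\cdot,0)\bigr)\,(v^{(0)})^{k-1}=0$ a.e.; evaluating at $x_0$ (using continuity and $v^{(0)}(x_0)\neq 0$) and letting $x_0$ vary gives the conclusion. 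In the sub-case $\p_z a_j(\cdot,0)\equiv 0$, one can simply take $v^{(0)}\equiv 1$. You should also note (as the paper does via a small lemma) that since the equation involves real-valued solutions, one uses real and imaginary parts of CGOs and recombines; this is a minor bookkeeping point but worth recording.
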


Theorem \ref{thm uniqueness of a} is contained in \cite{FO2019semilinear} also in the case where $a_j$ are H\"older continuous in the $x$ variable, and the case where $a(x,z)=q(x)z^m$ with $q\in C^\infty(\overline{\Omega})$, $m\in \N$ and $m\geq2$, is contained in \cite[Theorem 1.2]{lassas2019nonlinear}. To prepare for the partial data results, we will give a proof of Theorem \ref{thm uniqueness of a} as well as a reconstruction algorithm to recover the coefficients $\p_z^ka(x,0)$ for all $k \geq 2$ in Section \ref{Section 2}.

Next, we introduce an \emph{inverse obstacle problem} for semilinear elliptic equations. Let $\Omega$ and $D$ be a bounded open sets with  $C^\infty$ boundaries $\p \Omega$ and $\p D$ such that $D\subset\subset \Omega$. Assume that $\p \Omega$ and $\Omega \setminus \overline{D}$ are connected. Let $a(x,z)\in C^\infty((\overline\Omega \setminus D) \times \R)$ be a function satisfying \eqref{condition of a} for $x\in \Omega \setminus \overline{D}$. Consider the following semilinear elliptic equation 
\begin{align}\label{main equation_cavity}
	\begin{cases}
	\Delta u + a(x,u) =0 & \text{ in }\Omega \setminus \overline{D},\\
	u =0 &  \text{ on }\p D, \\
	u =f & \text{ on }\p \Omega.
	\end{cases}
\end{align}
For $s>1$ and $s \notin \N$, let $f \in C^s(\p \Omega)$ with $\norm{f}_{C^s(\p \Omega)}< \delta$, where $\delta>0$ is a sufficiently small number. The condition \eqref{condition of a} yields the well-posedness of \eqref{main equation_cavity} for small solutions by \cite[Proposition 2.1]{lassas2019nonlinear}, and one can define the corresponding DN map $\Lambda_{a}^D$, with Neumann values measured only on $\p \Omega$, by 
\[
 \Lambda_{a}^D : C^{s}(\p \Omega) \to C^{s-1}(\p \Omega),  \quad \Lambda_{a}^D: f \mapsto \p_\nu u |_{\p \Omega}.
\] 
The inverse obstacle problem is to determine the unknown cavity $D$ and the coefficient $a$ from the DN map $\Lambda_{a}^D$. Our second main result is as follows.


\begin{thm}[Simultaneous recovery: Unknown cavity and coefficients]\label{thm: Nonlinear Schiffer's problem}
	Assume that $\Omega \subset \R^n$, $n\geq 2$, is a bounded domain with connected $C^\infty$ boundary $\p \Omega$. Let $D_1, D_2\subset \subset \Omega$ be nonempty open subsets with $C^\infty$ boundaries such that $\Omega \setminus \overline{D_j}$ are connected. For $j=1,2$, let 
	\[
	a_j(x,z)\in C^\infty((\Omega\setminus \overline{D_j})\times \R)
	\]
	satisfy \eqref{condition of a} and denote by $\Lambda_{a_j}^{D_j}$ the DN maps of the following Dirichlet problems 
	\begin{align*}
	\begin{cases}
		\Delta u_j +a_j (x,u_j)=0 & \text{ in }\Omega \setminus \overline{D_j}, \\
		u_j =0 & \text{ on }\p D_j,\\
		u_j =f & \text{ on }\p \Omega
	\end{cases}
	\end{align*}
	defined with respect to the unique small solution for sufficiently small $f\in C^s(\p \Omega)$ (see \cite[Section 2]{lassas2019nonlinear} for detailed discussion).
	Assume that 
	\[
	\Lambda_{a_1}^{D_1}(f)= \Lambda_{a_2}^{D_2}(f) \text{ on }\p \Omega \text{ whenever $\norm{f}_{C^s(\p \Omega)}$ is sufficiently small.}
	\] 
	Then 
	\[
	D:=D_1 = D_2 \quad \text{ and } \quad \p_z^ka_1(x,0)=\p_z^ka_2(x,0) \text{ in }\Omega\setminus \overline{D} \text{ for }k\geq 2.
	\]
\end{thm}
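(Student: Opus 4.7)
The plan is to prove the theorem in two stages: first identify the cavity via the first-order linearization, then recover the Taylor coefficients $\partial_z^k a(x,0)$ by higher-order linearizations. Throughout, condition~\eqref{condition of a} forces $u_j^{\epsilon=0}\equiv 0$ and turns the first linearized equation into the plain Laplace equation.

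\emph{Stage 1 (recovery of $D$).} For Dirichlet data $\epsilon g$ with $g\in C^s(\partial\Omega)$, the first variation $v_j:=\partial_\epsilon u_j^\epsilon|_{\epsilon=0}$ is harmonic in $\Omega\setminus\overline{D_j}$ with $v_j=g$ on $\partial\Omega$ and $v_j=0$ on $\partial D_j$, and the DN-map identity yields $\partial_\nu v_1=\partial_\nu v_2$ on $\partial\Omega$ for every $g$. Let $G$ be the connected component of $\Omega\setminus(\overline{D_1}\cup\overline{D_2})$ meeting $\partial\Omega$; matching Cauchy data on $\partial\Omega$ and unique continuation give $v_1\equiv v_2$ in $G$. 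Assuming $D_1\neq D_2$, the connectedness of $\Omega\setminus\overline{D_j}$ provides a point $x_0\in\partial D_1\cap\partial G$ with $x_0\notin\overline{D_2}$ (after possibly swapping indices). Since $v_1$ is continuous up to $\partial D_1$ with value zero, passing to the limit in $G$ forces $v_2(x_0)=0$ for every $g$. Taking $g\equiv 1$ and applying the strong maximum principle to $v_2$ on $\Omega\setminus\overline{D_2}$ (boundary values $0$ and $1$) gives $0<v_2(x_0)<1$, a contradiction. Hence $D:=D_1=D_2$.

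\emph{Stage 2 (Taylor coefficients).} Following the higher-order linearization scheme, I introduce multi-parameter data $f=\sum_{l=1}^N\epsilon_l f_l$ and set $w^{(j)}_{l_1\cdots l_k}:=\partial_{\epsilon_{l_1}}\cdots\partial_{\epsilon_{l_k}}u_j^\epsilon|_{\epsilon=0}$. Stage~1 already gives $w^{(1)}_l=w^{(2)}_l=:v^{(l)}$, the harmonic extension on $\Omega\setminus\overline D$ with $v^{(l)}=f_l$ on $\partial\Omega$ and $v^{(l)}=0$ on $\partial D$. Induction on $k\geq 2$: assuming $\partial_z^m a_1=\partial_z^m a_2$ on $\Omega\setminus\overline D$ for $2\leq m<k$, well-posedness yields $w^{(1)}_{l_1\cdots l_m}=w^{(2)}_{l_1\cdots l_m}$ for every $m<k$, so lower-order contributions in the Taylor expansion of $a_j(x,u_j^\epsilon)$ cancel upon subtraction and
\[
\Delta\bigl(w^{(1)}_{l_1\cdots l_k}-w^{(2)}_{l_1\cdots l_k}\bigr)=-\bigl(\partial_z^k a_1-\partial_z^k a_2\bigr)(x,0)\,v^{(l_1)}\cdots v^{(l_k)}
\]
in $\Omega\setminus\overline D$, with zero Dirichlet trace on $\partial\Omega\cup\partial D$ and matching Neumann trace on $\partial\Omega$ by the DN-map assumption. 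Pairing with a further harmonic $v^{(0)}$ on $\Omega\setminus\overline D$ vanishing on $\partial D$ via Green's identity annihilates every boundary term and produces
\[
\int_{\Omega\setminus\overline D}\bigl(\partial_z^k a_1-\partial_z^k a_2\bigr)(x,0)\,v^{(l_1)}\cdots v^{(l_k)}v^{(0)}\,dx=0.
\]

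\emph{Main obstacle and conclusion.} To deduce $\partial_z^k a_1=\partial_z^k a_2$ from this identity I invoke density of products $v^{(l_1)}v^{(l_2)}$ in $L^1(\Omega\setminus\overline D)$, where $v^{(l_i)}$ ranges over harmonic functions on $\Omega\setminus\overline D$ vanishing on $\partial D$. This density is the crux of the proof and is the content of the linearized partial-data Calder\'on problem solved in~\cite{ferreira2009linearized}; the obstacle is that the standard CGO solutions $e^{\rho\cdot x}$ with $\rho\cdot\rho=0$ are harmonic but do not vanish on $\partial D$, so one must subtract off harmonic corrections absorbing the nonzero trace on $\partial D$, with sharp control as $\abs{\rho}\to\infty$. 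Granting this density, I fix $v^{(l_3)},\ldots,v^{(l_k)},v^{(0)}$ to be strictly positive harmonic extensions (take the corresponding $f$'s to be $1$ and invoke the strong maximum principle); writing $Q:=(\partial_z^k a_1-\partial_z^k a_2)(x,0)\cdot v^{(l_3)}\cdots v^{(l_k)}v^{(0)}$ reduces the identity to $\int Q\cdot v^{(l_1)}v^{(l_2)}\,dx=0$ for all admissible pairs, so $Q\equiv 0$ by density, and the interior positivity of the weight yields $\partial_z^k a_1=\partial_z^k a_2$ on $\Omega\setminus\overline D$. Induction on $k$ then completes the recovery of the full Taylor series and finishes the proof.
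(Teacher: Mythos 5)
Your proposal is correct and follows essentially the same two-step route as the paper: first-order linearization plus unique continuation plus a maximum-principle contradiction to identify $D$, then higher-order linearization, Green's identity against an auxiliary harmonic $v^{(0)}$ vanishing on $\partial D$, and the Ferreira--Kenig--Sj\"ostrand--Uhlmann density result~\cite{ferreira2009linearized} to recover the Taylor coefficients. The only places where the paper is more explicit than your sketch are (i) the existence of the contact point $x_0\in\partial G\cap\partial D_1\cap(\Omega\setminus\overline{D_2})$, which the paper isolates as a topological lemma in the appendix rather than deducing it from connectedness alone, and (ii) a small lemma reducing the density argument to real and imaginary parts of complex-valued harmonic functions so that the solutions $v^{(\ell)}$ remain real; both are routine and your proposal correctly identifies where they are needed.
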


The proof is based on higher order linearizations, and relies on the solution of the linearized Calder\'on problem with partial data given in \cite{ferreira2009linearized}. 

We remark that the analogous problem for the case $a(x,u)=q(x)u$ becomes an inverse problem for the linear Schr\"odinger equation. The inverse problem of determining $D$ from $\Lambda_{D,a}$ is usually regarded as  the \emph{obstacle problem}. The obstacle problem with a single measurement, i.e., determining the obstacle $D$ by a single Cauchy data $\left\{u|_{\p \Omega} ,\p_\nu u|_{\p \Omega} \right\}$ is a long-standing problem in inverse scattering theory. This type problem is also known as \emph{Schiffer's problem}, and the problem has been widely studied when the surrounding coefficients are known a priori. We refer the readers to  \cite{colton2012inverse,isakov2006inverse,liu2008uniqueness} for introduction and discussion.

Many researchers have made significant progress in recent years on Schiffer's problem for the case with general polyhedral obstacles. For the uniqueness and stability results, see \cite{AR2005determining,CY2003uniqueness,LZ2006uniqueness,LZ2007unique,rondi2003unique,rondi2008stable}. Under the assumption that $\partial D$ is nowhere analytic, Schiffer's problem was solved in \cite{HNS2013analytic}. However, Schiffer's problem still remains open for the case with general obstacles. Furthermore, a nonlocal type Schiffer's problem was solved by \cite{cao2017simultaneously}.
We also want to point out that the simultaneous recovery of an obstacle and an unknown surrounding potential is also a long-standing problem in the literature. This problem is closely related to the \emph{partial data} Calder\'on problem \cite{kenig2007calderon,imanuvilov2010calderon}. Unique recovery results in the literature are based on knowing the embedded obstacle to recover the unknown potential \cite{imanuvilov2010calderon}, knowing the surrounding potential to recover the unknown obstacle \cite{KL2013direct,KP1998recovering,LZZ2015determining,LZ2010direct,OD2006inverse}, or using multiple spectral data to recover both the obstacle and potential \cite{LL2017recovery}.

Based on the connection of simultaneous recovery problems and the partial data Calder\'on problem, we will next study a partial data problem for semilinear elliptic equations. In fact, we will consider the case where both the coefficients of the equation and a part of the boundary are unknown. In the study of partial data inverse problems for (linear) elliptic equations one usually assumes that the non-accessible part of the boundary is a priori known. This is not always a reasonable assumption in practical situations. For example, in medical imaging the body shape outside of the attached measurement device may not be precisely known. 

Let $\Omega \subset \R^n$ be a bounded connected domain with $C^\infty$ boundary $\p \Omega$. Let $\Gamma\subset \p \Omega$
be nonempty open set (the known part of the boundary), and assume that we do not know $\p \Omega \setminus \Gamma$ a priori. We consider the following semilinear elliptic equation 
\begin{align}\label{main equation2}
\begin{cases}
\Delta u + a(x,u) =0 & \text{ in }\Omega ,\\
u =0 &  \text{ on }\p \Omega\setminus \Gamma, \\
u =f & \text{ on }\Gamma,
\end{cases}
\end{align}
where $a(x,u)$ is a smooth function fulfilling \eqref{condition of a}. 
For $s>1$ and $s\notin \N$, let $f \in C^s_c(\Gamma)$ with $\norm{f}_{C^s(\Gamma)}< \delta$, where $\delta >0$ is any sufficiently small number. Then by the well-posedness of \eqref{main equation2} (see \cite[Proposition 2.1]{lassas2019nonlinear} again), one can define the corresponding DN map $\Lambda_{a}^{\Omega,\Gamma}$ with
\[
\Lambda_{a}^{\Omega,\Gamma} : C^s_c(\Gamma) \to C^{s-1}(\Gamma), \ \  f \mapsto \p_\nu u |_{\Gamma}.
\] 
The inverse problem is to determine unknown part of the boundary $\p \Omega\setminus \Gamma$ and the coefficient $a$ from the DN map $\Lambda_{a}^{\Omega,\Gamma}$. 

\begin{thm}[Simultaneous recovery: Unknown boundary and coefficients]\label{thm: partial data}
	Let $\Omega_j  \subset \R^n$, $n \geq 2$, be a bounded domain with $C^\infty$ boundary $\p \Omega_j$ for $j=1,2$, and let $\Gamma$ be a nonempty open subset of both $\p \Omega_1$ and $\p \Omega_2$. Let $a_j(x,z)$ be smooth functions satisfying \eqref{condition of a}. Let $\Lambda_{a_j}^{\Omega_j, \Gamma}$ be the DN maps of the following problems 
	\begin{align*}
	\begin{cases}
	\Delta u +a_j(x,u)=0 & \text{ in }\Omega_j, \\
	u_j =0 & \text{ on }\p \Omega_j \setminus \Gamma,\\
	u_j =f & \text{ on }\Gamma,
	\end{cases}
	\end{align*}
	for $j=1,2$. Assume that 
	$$
	\Lambda_{a_1}^{\Omega_1,\Gamma}(f)= \Lambda_{a_2}^{\Omega_2, \Gamma}(f) \text{ on }\Gamma
	$$ 
	for any $f\in C^s_c(\Gamma)$ with $\norm{f}_{C^s(\Gamma)}<\delta$, for a sufficiently small number $\delta>0$. Then we have 
	\[
	\Omega_1 = \Omega_2:=\Omega \quad \text{ and }\quad \p_z^k a_1(x,0)=\p_z^ka_2(x,0) \text{ in }\Omega \text{ for } k\geq 2.
	\]
\end{thm}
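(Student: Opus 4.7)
The plan is to combine a first-order linearization to identify the two domains with a higher-order linearization scheme to extract the Taylor coefficients of $a_j$, both adapted to the partial-data setting. I would parameterize the boundary data as $f = \sum_{i=1}^N \epsilon_i f_i$ with $f_i \in C^s_c(\Gamma)$ and small $\epsilon = (\epsilon_1, \dots, \epsilon_N)$. Well-posedness of \eqref{main equation2} gives smooth dependence $u_j = u_j(x; \epsilon)$, and $a_j(x,0) = \partial_z a_j(x,0) = 0$ implies that the first linearizations $v_j^{(i)} := \partial_{\epsilon_i} u_j|_{\epsilon=0}$ are harmonic in $\Omega_j$, equal to $f_i$ on $\Gamma$, and vanish on $\partial \Omega_j \setminus \Gamma$. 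The equality of the partial DN maps yields $\partial_\nu v_1^{(i)} = \partial_\nu v_2^{(i)}$ on $\Gamma$. Let $G$ denote the connected component of $\Omega_1 \cap \Omega_2$ whose boundary contains $\Gamma$ (nonempty since a one-sided collar neighborhood of $\Gamma$ lies in both $\Omega_j$). Unique continuation for the Laplacian across $\Gamma$, applied to $v_1^{(i)} - v_2^{(i)}$ which has vanishing Cauchy data there, forces $v_1^{(i)} = v_2^{(i)}$ throughout $G$.

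Next I would argue by contradiction to conclude $\Omega_1 = \Omega_2$. If not, there exists, up to relabeling, a point $x_0 \in \partial \Omega_2 \cap \Omega_1$ lying on $\partial G$. Continuity of $v_2^{(i)}$ up to $\partial \Omega_2$ and its vanishing on $\partial \Omega_2 \setminus \Gamma$ force $v_1^{(i)}(x_0) = \lim_{G \ni y \to x_0} v_2^{(i)}(y) = 0$ for every $f_i \in C^s_c(\Gamma)$. A Runge-type density argument for harmonic functions with partial Dirichlet data --- proved via Hahn--Banach and unique continuation, the orthogonal complement of such solutions being annihilated by a Green function vanishing on $\Gamma$ --- then yields $f_i \in C^s_c(\Gamma)$ with $v_1^{(i)}(x_0)$ arbitrarily close to $1$, a contradiction. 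Hence $\Omega_1 = \Omega_2 =: \Omega$.

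With the domain identified, I would induct on $k \geq 2$ to recover $\partial_z^k a_j(x,0)$ by higher-order linearization, following the template of \cite{lassas2019nonlinear, FO2019semilinear}. Differentiating \eqref{main equation2} $k$ times in $\epsilon$ and evaluating at $\epsilon = 0$ produces a Poisson equation
\begin{equation*}
\Delta w_j^{(k)} = -\partial_z^k a_j(x,0) \, v^{(1)} \cdots v^{(k)} + R_j,
\end{equation*}
with zero Dirichlet data on $\partial \Omega$, where $R_j$ involves only $\partial_z^\ell a_j(x,0)$ for $2 \leq \ell < k$ and products of lower-order linearizations, which coincide for $j = 1, 2$ by the induction hypothesis. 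Testing the difference against a harmonic $v^{(0)}$ with Dirichlet data in $C^s_c(\Gamma)$, and using $\partial_\nu(w_1^{(k)} - w_2^{(k)}) = 0$ on $\Gamma$ together with $w_1^{(k)} - w_2^{(k)} = 0$ on $\partial \Omega$, Green's identity gives
\begin{equation*}
\int_\Omega \bigl(\partial_z^k a_1 - \partial_z^k a_2\bigr)(x,0) \, v^{(0)}(x) v^{(1)}(x) \cdots v^{(k)}(x) \, dx = 0
\end{equation*}
for all harmonic $v^{(l)}$ on $\Omega$ with Dirichlet data in $C^s_c(\Gamma)$. The density of products of two such partial-data harmonic functions from the linearized partial-data Calder\'on problem \cite{ferreira2009linearized} (absorbing the remaining factors into constants via the same Runge approximation) then forces $\partial_z^k a_1(x,0) = \partial_z^k a_2(x,0)$ in $\Omega$.

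The principal obstacle I expect is Phase 1: carrying out the Runge-type approximation rigorously in the partial-data setting with two a priori different domains, and verifying the unique continuation step in the correct connected component so that the contradiction point $x_0$ genuinely belongs to the symmetric difference of $\Omega_1$ and $\Omega_2$. Once the common domain is identified, the inductive reduction to \cite{ferreira2009linearized} is a fairly direct adaptation of the standard higher-order linearization argument of \cite{lassas2019nonlinear, FO2019semilinear} to partial boundary data.
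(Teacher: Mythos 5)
Your overall architecture matches the paper's proof: first-order linearization to get harmonic functions $v_j^{(\ell)}$ with partial Dirichlet data, equality on a common component $G$ via unique continuation, a boundary-point contradiction to force $\Omega_1=\Omega_2$, then higher-order linearization and the result of \cite{ferreira2009linearized} to recover the Taylor coefficients. Two comments.

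\textbf{Phase 1: the topological step is not a detail.} You write ``If not, there exists, up to relabeling, a point $x_0 \in \partial \Omega_2 \cap \Omega_1$ lying on $\partial G$,'' and later flag this as an expected obstacle --- correctly so. This is precisely the content of Lemma \ref{lemma_boundary_intersection} in the appendix, which requires a genuine argument: one must show that $\partial G \cap \Omega_1 \cap (\partial\Omega_2\setminus\Gamma)$ is nonempty by first establishing the inclusion $\partial(\Omega_1\setminus G) \subset \{\partial G \cap (\partial\Omega_2\setminus\Gamma)\} \cup (\partial\Omega_1\setminus\Gamma)$ and then deriving a disconnection of $\Omega_1$ if the intersection were empty. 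Without this, the contradiction point does not exist and Phase 1 collapses. Since the rest of your Phase 1 is built on this point, you should regard the topological lemma as a required ingredient rather than an afterthought.

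\textbf{Runge vs.\ maximum principle.} Where the paper, after locating $x_1 \in \partial G \cap \Omega_1 \cap (\partial\Omega_2\setminus\Gamma)$, simply chooses $f_\ell \geq 0$, $f_\ell \not\equiv 0$ and invokes the strong maximum principle ($v_1^{(\ell)}(x_1)=0$ at an interior minimum forces $v_1^{(\ell)}\equiv 0$, contradicting $v_1^{(\ell)}|_\Gamma = f_\ell$), you propose a Runge-type density argument to contradict $v_1^{(i)}(x_0)=0$ for all $f_i \in C^s_c(\Gamma)$. This is a legitimate alternative --- in fact the paper's own commented-out remark observes exactly this substitution is possible --- but it is strictly more machinery than needed; the maximum-principle argument is shorter and avoids the Hahn--Banach/UCP duality step. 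In Phase 2 your ``absorbing the remaining factors into constants via the same Runge approximation'' is imprecise: the paper instead takes the extra data $f_{\ell_3},\dots,f_{\ell_{N+1}}$ (and $v^{(0)}$) to be nonnegative and nonzero on $\Gamma$, uses the maximum principle to make the corresponding $v^{(\ell_k)}$ strictly positive in $\Omega$, and then invokes the $L^1$-density of products of two of the special partial-data harmonic functions of \cite{ferreira2009linearized}. You also should not forget Lemma \ref{lemma_complex_real_solutions}: those special solutions are complex-valued while the linearizations are real, so one must pass through real and imaginary parts.
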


The proof again relies on higher order linearizations and on the solution of the linearized Calder\'on problem with partial data \cite{ferreira2009linearized}. By using Theorem \ref{thm: partial data}, we immediately have the following result, which was first proved in the preprint \cite{KU2019partial} that appeared on arXiv just before this preprint was submitted.

\begin{cor}[Partial data] \label{corollary_partial_data}
	Let $\Omega \subset \R^n$, $n \geq 2$, be a bounded domain with $C^\infty$ boundary $\p \Omega$, and let $\Gamma \subset \Omega$ be a nonempty open subset. Let $a_j(x,z)$ be smooth functions satisfying \eqref{condition of a} and let $\Lambda_{a_j}^{\Omega,\Gamma}$ be  the partial data DN map for the Dirichlet problem 
	\begin{align*}
	\begin{cases}
	\Delta u+ a_j(x,u)=0 & \text{ in }\Omega, \\
	u=0 & \text{ on }\p \Omega \setminus \Gamma, \\
	u=f  & \text{ on }\Gamma,
	\end{cases}
	\end{align*}
	for $j=1,2$. Assume that 
	\[
	\Lambda_{a_1}^{\Omega,\Gamma}(f)=\Lambda_{a_2}^{\Omega,\Gamma}(f) \text{ on }\Gamma,
	\]
	for any sufficiently small $f\in C^s_c(\Gamma)$. Then
	\[
	\p_z^k a_1(x,0)=\p_z^k a_2(x,0) \text{ in }\Omega \text{ for }k\geq 2. 
	\]
\end{cor}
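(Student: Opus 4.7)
The plan is to deduce Corollary \ref{corollary_partial_data} directly from Theorem \ref{thm: partial data} by specializing to $\Omega_1 = \Omega_2 = \Omega$. One just checks that the hypotheses of Theorem \ref{thm: partial data} are satisfied in this setup: $\Omega$ is a bounded $C^\infty$ domain playing the role of both $\Omega_1$ and $\Omega_2$, the set $\Gamma$ is automatically a nonempty open subset of $\p \Omega_1 = \p \Omega_2 = \p \Omega$ (reading $\Gamma \subset \p\Omega$), the nonlinearities $a_j$ satisfy \eqref{condition of a}, and the partial-data DN maps agree on $\Gamma$ by assumption. Theorem \ref{thm: partial data} then produces $\Omega_1 = \Omega_2$, which is trivially the case here, together with the desired identity $\p_z^k a_1(x,0) = \p_z^k a_2(x,0)$ in $\Omega$ for all $k\geq 2$.

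If one wished to prove the statement directly rather than quote Theorem \ref{thm: partial data}, the natural approach is to perform higher order linearization. One tests with boundary data of the form $f = \sum_{j=1}^N \epsilon_j f_j$ with $f_j \in C^s_c(\Gamma)$ and small parameters $\epsilon_j$, and differentiates the identity $\Lambda_{a_1}^{\Omega,\Gamma}(f) = \Lambda_{a_2}^{\Omega,\Gamma}(f)$ in $\epsilon_1, \dots, \epsilon_N$ at $\epsilon=0$. Since \eqref{condition of a} forces the first linearization to be just the Laplace equation, the linearized solutions $v_j$ are harmonic functions vanishing on $\p\Omega \setminus \Gamma$ with prescribed trace $f_j$ on $\Gamma$. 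At order $N$ one obtains an integral identity over $\Omega$ of the form
\begin{equation*}
\int_{\Omega} \bigl( \p_z^N a_1(x,0) - \p_z^N a_2(x,0)\bigr)\, v_1 \cdots v_N v_{N+1}\, dx = 0,
\end{equation*}
where $v_{N+1}$ is an additional harmonic test function vanishing on $\p \Omega \setminus \Gamma$. The case $N=2$ is exactly the linearized partial data Calder\'on problem of \cite{ferreira2009linearized}, which gives $\p_z^2 a_1 = \p_z^2 a_2$. An induction on $N$, using that the lower order Taylor coefficients already coincide, settles the general case.

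The main obstacle in this plan is not the formal differentiation but the density statement needed at each order: one must show that products of harmonic functions on $\Omega$ whose boundary values are supported in the prescribed subset $\Gamma$ span a dense subspace of $L^1(\Omega)$. This is precisely the partial data result of \cite{ferreira2009linearized}, built on complex geometrical optics solutions with boundary traces localized by a Carleman-weight technique. Once that density is in hand, all higher order steps reduce to the same mechanism, and the partial data coefficient determination follows.
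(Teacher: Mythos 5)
Your proposal takes essentially the same route as the paper: the paper's own proof of Corollary \ref{corollary_partial_data} is precisely to specialize Theorem \ref{thm: partial data} to $\Omega_1 = \Omega_2 = \Omega$, which is your first paragraph. The additional sketch of a direct higher order linearization argument is simply a restatement of the mechanism inside the proof of Theorem \ref{thm: partial data} restricted to this special case, so it is consistent, though not needed.
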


For the corresponding linear equation, i.e., $a(x,u)=q(x)u$, the partial data problem of determining $q$ from the DN map $\Lambda_q^{\Omega,\Gamma}(f)|_{\Gamma}$ for any $f$ supported in $\Gamma$, where $\Gamma$ is an arbitrary nonempty open subset of $\p \Omega$, was solved in \cite{imanuvilov2010calderon} for $n=2$ and $q_j\in C^{2,\alpha}$. For $n\geq 3$, the partial data problem stays open, but there are partial results  \cite{bukhgeim2002partial, kenig2007calderon,isakov2007uniqueness,kenig2014calderon} when $\partial \Omega$ is assumed to be known. We refer to the surveys \cite{imanuvilov2013uniqueness,kenig2014recent} for further references.

\begin{rmk}
	In this work, we do not pursue optimal regularity assumptions for our inverse problems. Instead, we want to demonstrate how the nonlinearity helps us in understanding related inverse problems. In addition, if we assume that $a_j(x,z)$ are real analytic in $z$ for $j=1,2$, then one can completely recover the nonlinearity and show that $a_1(x,z)=a_2(x,z)$ in Theorems \ref{thm uniqueness of a}, \ref{thm: Nonlinear Schiffer's problem} and \ref{thm: partial data}. In particular this applies to equations of the type $\Delta u + q(x) u^m = 0$, where $m \geq 2$ is an integer.
\end{rmk}

The paper is structured as follows. In Section \ref{Section 2} we prove Theorem \ref{thm uniqueness of a}. We also provide reconstruction algorithms for $\p_z^ka (x,z)|_{z=0}$ for all $k\geq 2$. Theorem \ref{thm: Nonlinear Schiffer's problem} and Theorem \ref{thm: partial data} will be proved in Section \ref{Section 3} and Section \ref{Section 4}, respectively. Appendix \ref{sec_appendix} contains the proof of a topological lemma required in the arguments.

\vspace{10pt}

\noindent {\bf Acknowledgements.} 
All authors were supported by the Finnish Centre of Excellence in Inverse Modelling and Imaging (Academy of Finland grant 284715). M.S.\ was also supported by the Academy of Finland (grant 309963) and by the European Research Council under Horizon 2020 (ERC CoG 770924).

\section{Proof of Theorem \ref{thm uniqueness of a}}\label{Section 2}
We use higher order linearizations to prove Theorem \ref{thm uniqueness of a}. Before the proof we recall  Calder\'on's exponential solutions (\cite{calderon2006inverse}) to the equation $\Delta v =0$ in $\R^n$, and the complex geometrical optics solutions (CGOs) that solve $\Delta v + qv=0$ on a domain $\Omega$ in $\R^n$. These solutions will be used in the proof of Theorem \ref{thm uniqueness of a}. The exponential solutions of Calder\'on are of the form
\begin{align}\label{calderon_exponential}
v_1(x):=\exp((\eta +i\xi)\cdot x), \quad v_2(x):=\exp((-\eta+i\xi)\cdot x),
\end{align}
where $\eta$ and $\xi$ are any vectors in $\R^n$ that satisfy $\eta\perp \xi$ and $\abs{\eta}=\abs{\xi}$.
The functions $v_1$ and $v_2$ solve the Laplace equation
\[
\Delta v_1 =\Delta v_2 =0 \text{ in }\R^n.
\]
The linear span of the products $v_1 v_2 =\exp(2i\xi \cdot x)$, $\xi\in \R^n$, of Calder\'on's exponential solutions forms a dense set in $L^1(\Omega)$. In particular, if
\[
\int_\Omega f v_1 v_2\, dx=0
\]
holds for all Calder\'on's exponential solutions $v_1$ and $v_2$, then $f=0$.

The complex geometrical optics solutions (CGOs) generalize Calder\'on's exponential solutions. For $n\geq 3$, they are of the form (see e.g.~\cite{sylvester1987global}) 
\begin{equation}\label{cgos}
 v_1(x)=e^{\rho_1 \cdot x} (1+r_1), \quad v_2(x)=e^{\rho _2\cdot x}(1+r_2),
\end{equation}
where $\rho_1=\eta+i\left(\xi + \zeta\right) \in \C^n$ and $\rho_2=-\eta+i\left(\xi - \zeta \right) \in \C^n$. Here $\eta, \xi, \zeta \in \R^n$ satisfy 
\[
\eta \cdot \xi = \xi \cdot \zeta = \zeta \cdot \eta =0, \text{ and }|\eta|^2=|\xi |^2 +|\zeta|^2.
\]
The idea is that $\xi$ is fixed but $\abs{\eta}, \abs{\zeta} \to \infty$. If $q\in L^\infty$, the CGO solutions $v_1$ and $v_2$ satisfy
\[
 (\Delta +q)v_1=(\Delta +q)v_2=0 \text{ in }\Omega
\]
and $\norm{r_j}_{L^2(\Omega)}\leq \frac{C}{|\rho_j|}$ for some constant $C>0$ depending on $q_j$, for $j=1,2$. Thus the product $v_1 v_2$ converges to $e^{2ix \cdot \xi}$ as $\abs{\eta}, \abs{\zeta} \to \infty$. For $n=2$ one needs to use CGOs with quadratic phase functions instead, see~\cite{bukhgeim2008recovering} (for the conductivity equation CGOs with linear phase functions are still useful \cite{nachman1996global, astala2006calderon}).

The products of pairs of CGOs form a complete set in $L^1(\Omega)$ by \cite{sylvester1987global} for $n \geq 3$ and in $L^2(\Omega)$ by \cite{bukhgeim2008recovering, blasten2017singular} for $n=2$. In particular, if $f \in L^{\infty}(\Omega)$ and 
\[
 \int_\Omega f v_1 v_2\, dx=0
\]
holds for all CGOs $v_1$ and $v_2$, then $f=0$. 
We refer to the survey~\cite{uhlmann2009electrical} for more details and references on CGOs.

Before the proof, we need to discuss a minor issue: the equation $\Delta u + a(x,u) = 0$ involves real valued solutions ($a$ is defined on $\ol{\Omega} \times \R$), whereas exponential solutions and CGOs are complex valued. However, in the proof we can just use the real and imaginary parts of these solutions (which are solutions themselves, since the coefficients are real valued) by virtue of the following simple lemma.

\begin{lem} \label{lemma_complex_real_solutions}
Let $f \in L^{\infty}(\Omega)$, $v_1, v_2 \in L^2(\Omega)$, and $v_3, \ldots, v_m \in L^{\infty}(\Omega)$ be complex valued functions where $m \geq 2$. Then 
\[
\int_{\Omega} f v_1 \cdots v_m \,dx = \sum_{j=1}^{2^m} \int_{\Omega} c_j f w_1^{(j)} \cdots w_m^{(j)} \,dx
\]
where $c_j\in \{\pm 1, \pm i \} $ and $w_1^{(j)}\in \{\mathrm{Re}(v_1), \mathrm{Im}(v_1) \}, \cdots, w_m^{(j)}\in \{\mathrm{Re}(v_m), \mathrm{Im}(v_m) \}$ for $1\leq j \leq 2^m$.
\end{lem}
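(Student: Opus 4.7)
The plan is to proceed by direct expansion, since the statement is essentially a book-keeping identity about multilinearity. First I would write each factor as $v_k = \mathrm{Re}(v_k) + i\,\mathrm{Im}(v_k)$, and then multiply out the product $v_1 \cdots v_m$ by distributivity. This yields exactly $2^m$ terms, indexed by choices of ``real part or imaginary part'' at each of the $m$ slots; for each such term, the accompanying scalar is $i^{N}$ where $N$ is the number of slots at which the imaginary part was chosen. Since $i^N \in \{1, i, -1, -i\}$, each of these scalars lies in the allowed set $\{\pm 1, \pm i\}$, which identifies the $c_j$'s.

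Next I would integrate against $f$ and invoke linearity of the integral to distribute the sum over the integral sign, producing the desired expression
\[
\int_{\Omega} f v_1 \cdots v_m \,dx = \sum_{j=1}^{2^m} c_j \int_{\Omega} f\, w_1^{(j)} \cdots w_m^{(j)} \,dx.
\]
To justify that each term on the right is an absolutely convergent integral, I would note that $\mathrm{Re}(v_1), \mathrm{Im}(v_1) \in L^2(\Omega)$ (they are bounded in modulus by $|v_1|$), that $\mathrm{Re}(v_k), \mathrm{Im}(v_k) \in L^{\infty}(\Omega)$ for $k \geq 2$, and that $f \in L^{\infty}(\Omega)$, so an application of H\"older's inequality (pairing $f w_2^{(j)} \cdots w_m^{(j)} \in L^{\infty}$ with $w_1^{(j)} \in L^2$, which is legitimate on the bounded domain $\Omega$) gives finiteness. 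The same reasoning justifies applying linearity of the integral before the expansion.

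There is no real obstacle here; the lemma is a purely algebraic identity dressed up as an integral identity, and the only thing to be careful about is that the expansion of $\prod_k (\mathrm{Re}(v_k) + i\,\mathrm{Im}(v_k))$ is tracked correctly so that the coefficients $c_j$ end up in $\{\pm 1, \pm i\}$ rather than in a larger set. I would not attempt to give a more refined bookkeeping (for instance, writing $c_j$ explicitly as $i^{|S_j|}$ for a subset $S_j \subset \{1,\dots,m\}$) unless the later use of the lemma requires it, since the statement only asks for membership in $\{\pm 1, \pm i\}$.
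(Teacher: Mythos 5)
Your proof is correct and takes essentially the same route as the paper: write each $v_k = \mathrm{Re}(v_k) + i\,\mathrm{Im}(v_k)$, multiply out, and use linearity of the integral. You add a short justification of absolute convergence via H\"older's inequality, which the paper leaves implicit, but the core argument is identical.
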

\begin{proof}
The result follows by writing 
\[
\int_{\Omega} f v_1 \cdots v_m \,dx = \int_{\Omega} f (\re(v_1) + i \im(v_1)) \cdots (\re(v_m) + i \im(v_m)) \,dx 
\]
and by multiplying out the right hand side.
\end{proof}


\begin{proof}[Proof of  Theorem \ref{thm uniqueness of a}] We split the proof into two parts, where in the first part we assume that the linear terms of the operators $\Delta +a_j(x,z)$ vanish: $\p_z a_j(x,0)\equiv 0$, $j=1,2$.
The proof in this case is based on Calder\'on's exponential solutions. In the second part we consider the case $\p_z a_j(x,0)\neq 0$ and use CGOs instead of Calder\'on's exponential solutions.

	\vspace{10pt}

{\it Case 1. $\p_z a_j(x,0)\equiv 0$.}\\

\noindent The proof is by induction on the order of the order of differentiation $k\in \N$. By assumption, we have that
\[
 \p_za_1(x,0)=0=\p_za_2(x,0).
\]
Let then $N\in \N$ and assume that 
\begin{equation}\label{inducition_assumption}
 \p_z^ka_1(x,0)=\p_z^ka_2(x,0) \text{ for all }k=1,2, \cdots, N.
\end{equation}
The induction step is to show that~\eqref{inducition_assumption} holds for $k=N+1$. 

For $\ell=1,\ldots,N+1$, let $\eps_\ell$ be small positive real numbers, and let $f_\ell \in C^s(\p \Omega)$ be functions on the boundary. Let us denote $\epsilon=(\eps_1,\eps_2,\ldots, \eps_{N+1})$ and let
%
%
the function 
\[
u_j:=u_j(x;\epsilon), \quad j=1,2,
\]
be the unique small solution of the Dirichlet problem
	\begin{align}\label{equ1a1 in 1st example}
	\begin{cases}
	\Delta u_j +a_j(x,u_j) =0 &\text{ in }\Omega,\\
	u_j =\sum _{\ell=1}^{N+1}\eps_\ell f_\ell  &\text{ on }\p \Omega.
	\end{cases}
	\end{align}
	The existence of the unique small solution is guaranteed by~\cite[Proposition 2.1]{lassas2019nonlinear} (by redefining $\eps_\ell$ to be smaller if necessary). To prove the induction step, we will differentiate the equation~\eqref{equ1a1 in 1st example} with respect to the $\eps_\ell$ parameters several times. The differentiation is justified by~\cite[Proposition 2.1]{lassas2019nonlinear}.
	
	We begin with the first order linearization as follows. 
	Let us differentiate \eqref{equ1a1 in 1st example} with respect to $\epsilon_{\ell}$, so that 
	\begin{align}\label{equ2a1 in 1st example}
	\begin{cases}
	\Delta \left(\frac{\p}{\p \eps_\ell}u_j\right) + \p_{z} a_j (x,u_j)\left(\frac{\p}{\p \eps_\ell}u_j\right)=0 & \text{ in }\Omega, \\
	\frac{\p}{\p \eps_\ell}u_j = f_\ell &\text{ on }\p \Omega.
	\end{cases}
	\end{align}
	Evaluating \eqref{equ2a1 in 1st example} at $\epsilon = 0$ shows that 
	\begin{align*}
	\Delta v_j^{(\ell)}=0 \text{ in }\Omega \text{ with }v_j^{(\ell)}=f_\ell \text{ on }\p \Omega,
	\end{align*}
	where 
	\[
	v_j^{(\ell)}(x) =\left.\frac{\p}{\p \eps_\ell}\Big|_{\epsilon=0}u_j(x;\epsilon)\right..
	\]
	Here we have used $u_j(x;\epsilon)|_{\epsilon =0 }\equiv 0$ so that $\p_{z }a_j(x,u_j)|_{\eps=0}\equiv0$ in $\Omega$.  
	The functions $v_j^\ell$ are harmonic functions defined in $\Omega$ with boundary data $f_\ell|_{\p \Omega}$.
	By uniqueness of the Dirichlet problem for the Laplace operator we have that 
	\begin{align}\label{v_1 =v_2a1 Rn}
	v^{(\ell)}:=v_1^{(\ell)}=v_2^{(\ell)} \text{ in } \Omega \quad  \text{ for } \ell =1,2,\cdots, N+1.
	\end{align}
	
	For illustrative purposes we show next how to prove that $\p_z^2a_1(x,0)=\p_z^2a_2(x,0)$, which corresponds to the special case $N=1$. The second order linearization is given by differentiating \eqref{equ2a1 in 1st example} with respect to $\epsilon_k$ for arbitrary $k\neq \ell$ where $k,\ell  \in \{ 1,2,\cdots,N+1 \}$. Doing so yields
	\begin{align}\label{equ 3a1 in 1st example}
	\begin{cases}
	\Delta \left(\frac{\p^2}{\p \eps_k \p \eps_\ell}u_j \right) + \p_{z}a_j (x,u_j) \left(\frac{\p^2}{\p \eps_k \p \eps_\ell}u_j \right) + \p ^2_{z}a(x,u) \left(\frac{\p u_j}{\p \eps_k}\right) \left(\frac{\p u_j }{\p \eps_\ell } \right) =0 & \text{ in }\Omega, \\
	\frac{\p^2}{\p \eps_k \p \eps_\ell}u_j =0 & \text{ on }\p \Omega.
	\end{cases}
	\end{align}
	By evaluating~\eqref{equ 3a1 in 1st example} at $\epsilon=0$ we have that 
	\begin{align}\label{equ 4a1 in 1st example}
	\begin{cases}
	\Delta  w_j^{(k\ell )}+ \p ^2_{z}a_j(x,0) v^{(k)}v^{(\ell)}  =0 & \text{ in }\Omega, \\
	w_j^{(k\ell)}=0 & \text{ on }\p \Omega,
	\end{cases}
	\end{align}
	where we have denoted $w_j^{(k\ell )}(x)=\frac{\p^2}{\p \eps_k \p \eps_\ell}u_j(x;\epsilon)\big|_{\epsilon =0 }$ and used $u_j(x;\epsilon)|_{\epsilon=0}\equiv 0$ in $\Omega$ for $j=1,2$. 
	By using the fact that $\Lambda_{a_1}\left(\sum_{\ell=1}^{N+1} \eps_\ell f_\ell\right)=\Lambda_{a_2}\left(\sum_{\ell=1}^{N+1} \eps_\ell f_\ell\right)$,
	we have that
	\begin{equation}\label{normal_derivatives}
	\p_{\nu} u_1|_{\p \Omega} = \p_{\nu} u_2|_{\p \Omega}.
	\end{equation}
	By applying $\p_{\eps_k} \p_{\eps_\ell}|_{\epsilon=0}$ to the equation~\eqref{normal_derivatives} above shows that 
	\[
	\left.\p_\nu w_1^{(k\ell)}\right|_{\p \Omega}=\left.\p_\nu w_2^{(k\ell)}\right|_{\p \Omega}, \text{ for } k, \ell=1,\ldots,N+1.
	\]
	(We remind that this formal looking calculation is justified by~\cite[Proposition 2.1]{lassas2019nonlinear}.)
	Hence, by integrating the equation~\eqref{equ 4a1 in 1st example} over $\Omega$ 
	and by using integration by parts we obtain the equation
	\begin{align}\label{2nd ordera1 integral id in Rn}
	0=&\notag \int_{\p \Omega} \left(\p_\nu w_1^{(k\ell)}-\p_\nu w_2^{(k\ell)}\right) \,dS =\int_{\Omega} \Delta\left(w_1^{(k\ell)} -w_2^{(k\ell)}\right) \,dx\\
	=& \int_\Omega \left( \p ^2_za_2(x,0)-\p^2_z a_1(x,0)\right)v^{(k)}v^{(\ell)} \,dx
	\end{align}
	where $v^{(k)}$ and $v^{(\ell)}$ are defined in \eqref{v_1 =v_2a1 Rn}. (More generally, as in \cite{lassas2019nonlinear} we could as well have integrated against a third harmonic function $v^{(m)}$.) Therefore, by choosing $f_k$ and $f_{\ell}$ as the boundary values of the real or imaginary parts of Calder\'on's exponential solutions $v_1$ and $v_2$ in~\eqref{calderon_exponential} (note that the real and imaginary parts of $v_1$ and $v_2$ are also harmonic), and by using Lemma \ref{lemma_complex_real_solutions}, we obtain that 
	\[
	\int_\Omega \left( \p ^2_za_2(x,0)-\p^2_z a_1(x,0)\right) v_1 v_2 \,dx = 0.
	\]
	It follows that the Fourier transform of the difference $\p^2_z a_1(x,0)-\p^2_z a_2 (x,0)$ is zero. Thus $\p^2_z a_1(x,0)=\p^2_z a_2 (x,0)$. 
	We define
	\begin{align}\label{2nd ordera1 recovery}
	\p_z^2 a(x,0):= \p^2_z a_1(x,0)=\p^2_z a_2 (x,0).
	\end{align} 
	
	We also note that by using~\eqref{2nd ordera1 recovery}, the equation \eqref{equ 4a1 in 1st example} shows that the function $w_1^{(k\ell)}-w_2^{(k\ell)}$ solves 
	\[
	\Delta \left(w_1^{(k\ell)}-w_2^{(k\ell)} \right)=0, \text{ with }w_1^{(k\ell)}-w_2^{(k\ell)}=0 \text{ on }\p \Omega.
	\]
	Thus we have that
	\begin{align}\label{2nd ordera1 equal solutions}
	w^{(k\ell)}:=w_1^{(k\ell)}=w_2^{(k\ell)} \text{ in } \Omega.
	\end{align}

	We have now shown how to prove the special case $N=1$. Let us return to the general case $N\in \N$. To prove the general case, we first show by induction within induction, call it subinduction, that
	\begin{equation}\label{subinduction}
	 \frac{\p^{k} u_1(x;0)}{\p \eps_{\ell_1}\cdots \p \eps_{\ell_{k}}} =\frac{\p ^{k} u_2(x;0)}{\p \eps_{\ell_1}\cdots \p \eps_{\ell_{k}}} \text{ in }\Omega,
	\end{equation}
	for all $k=1,\ldots,N$. The claim holds for $k=1$ by~\eqref{v_1 =v_2a1 Rn}. Let us then assume that~\eqref{subinduction} holds for all $k\leq K<N$. The linearization of order $K+1$ evaluated at $\eps=0$ reads
	\begin{align}\label{equ 7a1 in 1st example}
	& \Delta \left(\frac{\p^{K+1} u_j(x,0)}{\p \eps_{\ell_1}\cdots \p \eps_{\ell_{K+1}}} \right) +  
	R_{K}(u_j,a_j,0)+ \p_z^{K+1} a_j(x,0)\left( \Pi _{k =1}^{K+1} v^{(\ell_k)} \right)=0 \text{ in }\Omega,
	\end{align}
	where 
	$R_K(u_j,a_j,0)$ is a polynomial of the functions $\p_z ^{k}a_j(x,0)$ and $ \frac{\p^{k} u_j(x;0)}{\p \eps_{\ell_1}\cdots \p \eps_{\ell_{k}}}$ for all $k\leq K$.
	By the induction assumptions~\eqref{inducition_assumption} and~\eqref{subinduction} these functions agree for $j=1,2$. Thus it follows that
    \begin{align*}
    \begin{cases}
    \Delta \left(\p^{K+1}_{\eps_{\ell_1}\cdots \eps_{\ell_{K+1}}}u_1(x,0)- \p^{K+1}_{\eps_{\ell_1}\cdots \eps_{\ell_{K+1}}}u_2(x,0) \right)=0 & \text{ in } \Omega \\
    \p^{K+1}_{\eps_{\ell_1}\cdots \eps_{\ell_{K+1}}}u_1(x,0)- \p^{K+1}_{\eps_{\ell_1}\cdots \eps_{\ell_{K+1}}}u_2(x,0)=0 &\text{ on } \p \Omega.
    \end{cases}
	\end{align*}
	(Above we have used the abbreviation $\p^{K+1}_{\eps_{\ell_1}\cdots \eps_{\ell_{K+1}}}u_j(x,0)=\frac{\p^{K+1} u_j(x,0)}{\p \eps_{\ell_1}\cdots \p \eps_{\ell_{K+1}}}$ for $j=1,2$ and for $K \in \N$, which will also be used later in the proof).
    Thus by the uniqueness of solutions to the Laplace equation we have that~\eqref{subinduction} holds for $k=1,\ldots,K+1$, which concludes the induction step of the subinduction. Thus \eqref{subinduction} holds for all $k=1,\ldots,N$. 
	

	Let us then continue with the main induction argument of the proof. The linearization of order $N+1$ at $\eps=0$ yields the equation~\eqref{equ 7a1 in 1st example} with $N$ in place of $K$. By the subinduction, we have that $R_N(u_1,a_1,0)=R_N(u_2,a_2,0)$. 
	By using this fact, it follows by subtracting the equations \eqref{equ 7a1 in 1st example} with $j=1$ and $j=2$ from each other (with $K=N$) that 
	\begin{align*}
	\int_\Omega \left(\p_z^{N+1} a_1(x,0)-\p_z^{N+1} a_2(x,0) \right) \left( \Pi _{k=1}^{N+1} v^{(\ell_k)} \right)dx=0.
	\end{align*}
	Here we used integration by parts and the assumption $\Lambda_{a_1}=\Lambda_{a_2}$. 
	We choose two of the functions $v^{(\ell_k)}$ to be the real or imaginary parts of the exponential solutions \eqref{calderon_exponential}, and the remaining $N-1$ of them to be the constant function $1$. Using Lemma \ref{lemma_complex_real_solutions} again, it follows that $\p_z^{N+1} a_1(x,0)= \p_z^{N+1} a_2 (x,0)$ in $\Omega$ as desired. This concludes the main induction step. 
	
	\vspace{10pt}
	
		\vspace{10pt}
	
	{\it Case 2. $\p_z a_j(x,0)\not\equiv 0$.}\\
	
	\noindent The proof is similar to the Case 1, and therefore we keep exposition short. As said before, the main difference is that we use CGOs~\eqref{cgos} instead of Calder\'on's exponential solutions~\eqref{calderon_exponential}. 
	We consider $\eps_\ell$ to be small numbers, $\ell =1,2,\cdots ,N+1$, and $\eps=(\eps_1,\cdots,\eps_{N+1})$ and $f_\ell \in C^s(\p \Omega)$, for all $\ell=1,2,\cdots,N+1$. Let the function $u_j:=u_j(x;\eps)$ be the unique small solution of 
\begin{align*}
\begin{cases}
\Delta u_j +a_j(x,u_j) =0 &\text{ in }\Omega,\\
u_j =\sum _{\ell=1}^{N+1} \eps_\ell f_\ell  &\text{ on }\p \Omega,
\end{cases}
\end{align*}
for $j=1,2$. We begin with the first order linearization as follows, which at $\eps=0$ yields: 
\begin{align}\label{equ3a in 1st example}
\begin{cases}
\left( \Delta +\p_z a_j(x,0)\right) v_j^{(\ell)}=0 & \text{ in }\Omega, \\
v_j^{(\ell)}=f_\ell & \text{ on }\p \Omega,
\end{cases}
\end{align}
where 
\[
v_j^{(\ell)}(x) =\left.\frac{\p}{\p \eps_\ell}\Big|_{\eps=0}u_j(x;\eps)\right..
\]
The functions $v_j^\ell$ are the solutions of the Schr\"odinger equation with potential $\p_z a_j(x,0)$ in $\Omega$ with boundary data $f_\ell|_{\p \Omega}$.

We show that $\p_z a_1(x,0)= \p_za_2(x,0)$ for $x\in \Omega$. Since the DN maps $\Lambda_{a_1}$ and $\Lambda_{a_2}$ agree, we have by \cite[Proposition 2.1]{lassas2019nonlinear} that the DN maps corresponding to the equation \eqref{equ3a in 1st example} are the same. It follows that 
\begin{equation}\label{first_ord_det}
\p_z a_1(x,0)= \p_za_2(x,0) 
\end{equation}
by the results \cite{bukhgeim2008recovering} and \cite{sylvester1987global} for $n=2$ and $n\geq 3$ respectively. Moreover, by using~\eqref{first_ord_det} and the uniqueness of solutions to the Dirichlet problem \eqref{equ3a in 1st example}, we have that 
\begin{align}\label{v_1 =v_2 Rn}
v^{(\ell)}:=v_1^{(\ell)}=v_2^{(\ell)} \text{ in } \Omega  \text{ for } \ell =1,2,\cdots, N+1,
\end{align} 
and we simply denote 
\[
q(x):=\p_za_1(x,0)=\p_za_2(x,0) \text{ for }x \in \Omega.
\]
Here we used the assumption~\eqref{condition of a 1}, which says that operators $\Delta +\p_z a_j(x,0)$ are injective on $H_0^1(\Omega)$, , $j=1,2$.

Since $\p_za_1(x,0)=\p_za_2(x,0)$, we have that the claim~\eqref{taylor_series_agree} of the theorem holds for $k=1$. We proceed by induction on $k$. To do that, we assume that~\eqref{taylor_series_agree} holds for all $k=1,\ldots,N$. Again, we do the $N=1$ case separately to explain how the induction works. The second order linearization yields the equations for $j=1,2$: 
\begin{align}\label{equ 4 in 1st example}
\begin{cases}
\Delta  w_j^{(k\ell )}+ q(x) w_j^{(k\ell)}+\p ^2_{z}a_j(x,0) v^{(k)}v^{(\ell)}  =0 & \text{ in }\Omega, \\
w_j^{(k\ell)}=0 & \text{ on }\p \Omega,
\end{cases}
\end{align}
where $w_j^{(k\ell )}(x)=\left.\frac{\p^2}{\p \eps_k \p \eps_\ell}u_j(x;\eps)\right|_{\eps =0 }$ and we used $u_j(x;\eps)|_{\eps=0}\equiv 0$ in $\Omega$. 
Since $\Lambda_{a_1}=\Lambda_{a_2}$, 
we have (as in Case 1) that
\[
\left.\p_\nu w_1^{(k\ell)}\right|_{\p \Omega}=\left.\p_\nu w_2^{(k\ell)}\right|_{\p \Omega}, \text{ for } k, \ell \in 1,\ldots, N.
\]

Fix $x_0\in \Omega$. We claim that there exists a solution $v^{(0)}\in H^s(\Omega)$, where $s$ can be chosen arbitrarily large, of the Schr\"odinger equation 
\begin{align}\label{equation of v^0}
	\Delta v^{(0)}+q(x)v^{(0)}=0 \text{ in } \Omega  
\end{align}
with
\[
 v^{(0)}(x_0) \neq 0.
\]
By the Runge approximation property (see e.g.\ \cite[Proposition A.2]{lassas2018poisson}), it is enough to construct such a solution in some small neighborhood $U$ of $x_0$. Since $q$ is smooth, by a perturbation argument it is enough to construct a nonvanishing solution of $\Delta w + q(x_0) w = 0$ near $x_0$. Writing $q(x_0) = \lambda^2$ for some complex number $\lambda$, it is enough to take $w = e^{i\lambda x_1}$. This completes the construction of $v^{(0)}$.

Now, multiplying \eqref{equ 4 in 1st example} by $v^{(0)}$ and integrating by parts yields that 
\begin{align}\label{2nd order integral id in Rn}
\begin{split}
0&=\int_{\p \Omega} v^{(0)} \p_\nu \left(w_1^{(k\ell)}-w_2^{(k\ell)}\right)\,dS\\
& =\int_\Omega v^{(0)} \Delta \left(w_1^{(k\ell)}-w_2^{(k\ell)}\right)\,dx+ \int_\Omega  \nabla v^{(0)}\cdot \nabla \left(w_1^{(k\ell)}-w_2^{(k\ell)}\right)\,dx\\
& =\int_\Omega q(x) (w_2^{(k\ell)}-w_1^{(k\ell)})v^{(0)}\,dx+\int_\Omega \left(\p^2_{z}a_2-\p^2_{z}a_1\right) v^{(k)}v^{(\ell)}v^{(0)}\,dx \\
& \qquad-\int_\Omega   \left(w_1^{(k\ell)}-w_2^{(k\ell)}\right)\Delta v^{(0)} \,dx \\
&=\int_\Omega \left( \p ^2_za_2(x,0)-\p^2_z a_1(x,0)\right)v^{(k)}v^{(\ell)} v^{(0)}\,dx.
\end{split}
\end{align}
Here $v^{(k)}$ and $v^{(\ell)}$ are solutions to  \eqref{v_1 =v_2 Rn},
which we now choose specifically to be real or imaginary parts of the CGOs (since $q$ is real valued, the real and imaginary parts of CGOs are also solutions of $\Delta v + q v = 0$). Then, by using Lemma \ref{lemma_complex_real_solutions} we can reduce to the case where $v^{(k)}$ and $v^{(\ell)}$ are the actual complex valued CGOs, and by applying the completeness of products of pairs of CGOs  \cite{bukhgeim2008recovering,sylvester1987global} we obtain that 
\[
\p ^2_za_2(x,0)v^{(0)}(x)=\p^2_z a_1(x,0)v^{(0)}(x) \text{ for }x \in \Omega.
\]
In particular, when $x=x_0$, we have $\p ^2_za_2(x_0,0)=\p^2_z a_1(x_0,0)$ since $v^{(0)}(x_0) \neq 0 $. Since $x_0 \in \Omega$ was arbitrary, we have that
\begin{align}\label{2nd order recovery}
\p_z^2 a(x,0):= \p^2_z a_1(x,0)=\p^2_z a_2 (x,0) \text{ for }x\in \Omega.
\end{align}
This concludes the induction step in the special case $N=1$. We also have from \eqref{equ 4 in 1st example} and \eqref{2nd order recovery} that $w_1^{(k\ell)}-w_2^{(k\ell)}$ solves 
\begin{align*}
	\begin{cases}
	\Delta \left(w_1^{(k\ell)}-w_2^{(k\ell)} \right)+q(x)\left(w_1^{(k\ell)}-w_2^{(k\ell)} \right)=0 & \text{ in }\Omega, \\
	w_1^{(k\ell)}-w_2^{(k\ell)}=0 & \text{ on }\p \Omega,
	\end{cases}
\end{align*}
then the uniqueness of the solution to the Schr\"odinger equation yields that 
\begin{align*}
w^{(k\ell)}:=w_1^{(k\ell)}=w_2^{(k\ell)} \text{ in } \Omega.
\end{align*}

Let us return to general case $N\in \N$. As in the Case 1, we first prove by the subinduction that
\begin{equation*}
	 \p^{k}_{\ell_1\cdots \ell_{k}}u_1(x;0) =\p^{k}_{\ell_1\cdots \ell_{k}}u_2(x;0) \text{ in }\Omega,
	\end{equation*}
	for all $k\leq N$. Then the linearization for $j=1,2$ of order $N+1$ shows that 
	\begin{align}\label{equ 7a1 in 1st exampleCase2}
	& (\Delta +q)\Big(\p^{N+1}_{\eps_{\ell_1}\cdots \eps_{\ell_{N+1}}}u_j(x,0)\Big) +  
	R_{N}(u_j,a_j,0)+ \p_z^{N+1} a_j(x,0)\left( \Pi _{k =1}^{N+1} v^{(\ell_k)} \right)=0,
	\end{align}
	for $x\in \Omega$, and where $R_N(u_j,a_j,0)$ is a polynomial of the functions $\p_z ^{k}a_j(x,0)$ and $\p^{k}_{\eps_{\ell_1}\cdots \eps_{\ell_{k}}}u_j(x;0)$ 
	for $k\leq N$. By the subinduction we have that $R_{N}(u_1,a_1,0)=R_{N}(u_2,a_2,0)$.
	
Finally, by multiplying~\eqref{equ 7a1 in 1st exampleCase2} by $v^{(0)}$ and repeating an integration by parts argument similar to that in~\eqref{2nd order integral id in Rn} 
shows that we have the following integral identity
\begin{align*}
\int_\Omega \left(\p_z^{N+1} a_1(x,0)-\p_z^{N+1} a_2(x,0) \right) \left( \Pi _{k =1}^{N+1} v^{(\ell_k)} \right)v^{(0)}dx=0.
\end{align*}
With the help of Lemma \ref{lemma_complex_real_solutions} we can choose $v^{(\ell_1)}$ and $v^{(\ell_2)}$ to be the CGOs as before, and we choose the remaining $N-1$ solutions as $v^{(\ell_3)}=\cdots =v^{(\ell_{N+1})}= v^{(0)}$, where $v^{(0)}$ is the solution in \eqref{equation of v^0}.
We conclude that $\p_z^{N+1} a_1(x_0,0)= \p_z^{N+1} a_2 (x_0,0)$. Since $x_0\in \Omega$ was arbitrary, we obtain that $\p_z^{N+1} a_1(x,0)= \p_z^{N+1} a_2 (x,0)$ in $\Omega$. This concludes the proof.
\end{proof}

\begin{rmk}
	In the proof of Theorem \ref{thm uniqueness of a}, we have used the Runge approximation property to construct solutions to the Schr\"odinger equation that are nonzero at a given point $x_0$. An alternative method is to construct a nonvanishing solution of $\Delta v + q(x)v=0$. This can be done by considering a complex geometrical optics solution 
	\[
	v(x)=e^{\rho \cdot x } (1+r) \text{ in }\Omega,
	\]
	where $\rho \in \C^n$. Then $r$ solves 
	\[
	e^{-\rho \cdot x}(\Delta + q)e^{\rho\cdot x}r=-q \text{ in }\Omega,
	\]
	with the estimate (see \cite[Theorem 1.1]{sylvester1987global}, the argument applies also in our case when $n \geq 2$) 
	\[
	\norm{r}_{H^s(\Omega)}\leq \frac{C}{|\rho|}\norm{q}_{H^s(\Omega)},
	\]
	for $s>n/2$ and $|\rho|$ large enough. Then by the Sobolev embedding we have that 
	\[
	\norm{r}_{L^\infty(\Omega)} \leq \frac{1}{2},
	\]
	for $\abs{\rho}$ large enough. This implies that $v(x)$ is nonvanishing in $\Omega$, and the solution $v^{(0)}$ in the proof of Theorem \ref{thm uniqueness of a} could be replaced by $v$ here. 
	
	In the case $q(x)\leq 0$ in $\Omega$  (with the sign convention $\Delta=\sum_{k=1}^n \p_{x_k}^2$), another alternative is to apply the maximum principle to construct a positive solution to the Schr\"odinger equation in $\Omega$. 
\end{rmk}


Furthermore, when the coefficient $a=a(x,z)$ of the operator $\Delta + a(x,\cdot)$ satisfies 
\begin{equation*}
\p_z a(x,0)\equiv 0 
\end{equation*}
one has the following reconstruction result.
\begin{thm}[Reconstruction] \label{thm of reconstruction}
	Let $n\geq 2$, and let $\Omega\subset \R^n$ be a bounded domain with $C^\infty$ boundary $\p \Omega$. Let $\Lambda_a$ be the DN map of the equation 
		\begin{equation*}
		\Delta u + a(x,u)=0 \text{ in } \Omega,
		\end{equation*}
		and assume that $a(x,z)\in C^\infty(\overline{\Omega} \times \R)$ satisfies \eqref{condition of a}.
		Then we can reconstruct $\p_z^k a(x,0)$ from the knowledge of $\Lambda_a$, for all $k\geq 2$.
\end{thm}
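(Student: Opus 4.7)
The plan is to proceed by induction on $k \geq 2$, reconstructing $\p_z^k a(x,0)$ at each step from $\Lambda_a$ together with the previously recovered derivatives $\p_z^m a(x,0)$, $2 \leq m \leq k-1$. The boundary data needed at stage $k$ is the $k$-th order Fr\'echet derivative of $\Lambda_a$ at the zero boundary value, evaluated on tuples of test functions $f_{\ell_1},\ldots,f_{\ell_k} \in C^s(\p \Omega)$; this is obtained directly from $\Lambda_a$ by differentiating $\eps \mapsto \Lambda_a(\sum_\ell \eps_\ell f_\ell)$ at $\eps=0$.

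For the base case $k=2$, solve the Laplace Dirichlet problems $\Delta v^{(\ell)} = 0$ in $\Omega$ with $v^{(\ell)}|_{\p \Omega} = f_\ell$, $\ell=1,2$, by any standard forward solver. Exactly as in the proof of Theorem~\ref{thm uniqueness of a}, the function $w^{(12)} = \p^{2}_{\eps_1 \eps_2}u(x;0)$ satisfies
\[
\Delta w^{(12)} = -\p_z^2 a(x,0)\, v^{(1)} v^{(2)} \text{ in } \Omega, \qquad w^{(12)}|_{\p \Omega} = 0,
\]
while $\p_\nu w^{(12)}|_{\p \Omega}$ is supplied by the second-order linearization of $\Lambda_a$, hence known. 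Integrating over $\Omega$ and applying the divergence theorem yields
\[
\int_\Omega \p_z^2 a(x,0)\, v^{(1)} v^{(2)}\, dx = -\int_{\p \Omega} \p_\nu w^{(12)}\, dS.
\]
Choosing $f_1, f_2$ as the boundary traces of the real and imaginary parts of Calder\'on's exponential solutions~\eqref{calderon_exponential} and invoking Lemma~\ref{lemma_complex_real_solutions} produces $\int_\Omega \p_z^2 a(x,0)\, e^{2i\xi \cdot x}\, dx$ for every $\xi \in \R^n$, i.e.\ the Fourier transform of the zero extension of $\p_z^2 a(\cdot,0)$ to $\R^n$; Fourier inversion then reconstructs $\p_z^2 a(x,0)$ pointwise in $\Omega$.

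For the inductive step, assume $\p_z^m a(x,0)$ is known in $\Omega$ for all $2 \leq m \leq N$. Using this knowledge, all mixed derivatives $\p^{m}_{\eps_{\ell_1}\cdots \eps_{\ell_m}}u(x;0)$ for $m \leq N$ can be computed recursively in $m$ by solving Poisson problems with explicit right-hand sides and zero Dirichlet boundary data. The $(N+1)$-th order linearization (cf.\ \eqref{equ 7a1 in 1st example}) then yields
\[
\Delta w^{(\ell_1\cdots \ell_{N+1})} + R_N + \p_z^{N+1} a(x,0) \prod_{k=1}^{N+1} v^{(\ell_k)} = 0 \text{ in } \Omega, \qquad w^{(\ell_1\cdots \ell_{N+1})}|_{\p \Omega} = 0,
\]
where $R_N$ is the polynomial in $\p_z^m a(x,0)$ and the $\eps$-mixed derivatives of $u$ of orders $\leq N$ described in the proof of Theorem~\ref{thm uniqueness of a}; by the inductive hypothesis $R_N$ is entirely computable. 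Since $\p_\nu w^{(\ell_1\cdots \ell_{N+1})}|_{\p \Omega}$ is the $(N+1)$-th order linearization of $\Lambda_a$, integration over $\Omega$ produces
\[
\int_\Omega \p_z^{N+1} a(x,0) \prod_{k=1}^{N+1} v^{(\ell_k)}\, dx = -\int_{\p \Omega} \p_\nu w^{(\ell_1\cdots \ell_{N+1})}\, dS - \int_\Omega R_N\, dx.
\]
Taking $v^{(\ell_1)}, v^{(\ell_2)}$ to be real or imaginary parts of Calder\'on's exponential solutions and $v^{(\ell_3)} = \cdots = v^{(\ell_{N+1})} \equiv 1$ (all of which are harmonic), and applying Lemma~\ref{lemma_complex_real_solutions} once more, reduces the identity to knowledge of the Fourier transform of the zero extension of $\p_z^{N+1} a(\cdot,0)$, which is inverted to complete the induction.

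The principal conceptual point, rather than an obstacle, is the verification that the remainder $R_N$ contains no occurrence of $\p_z^{N+1} a(x,0)$ and no $\eps$-mixed derivative of $u$ of order $N+1$. This follows from the Fa\`a di Bruno structure of $\p^{N+1}_{\eps_{\ell_1} \cdots \eps_{\ell_{N+1}}}a(x,u(x;\eps))|_{\eps=0}$, whose unique top-order contribution is precisely the isolated term $\p_z^{N+1} a(x,0) \prod_k v^{(\ell_k)}$; every remaining contribution involves only $\p_z^m a(x,0)$ with $m \leq N$ together with mixed $u$-derivatives of order at most $N$. All other ingredients — solving Laplace and Poisson problems with known data, differentiating $\Lambda_a$ in the boundary parameters, and inverting the Fourier transform of a compactly supported distribution — are standard constructive procedures.
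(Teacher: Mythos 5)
Your proposal is correct and follows essentially the same route as the paper: induction (recursion) on $k$, base case $k=2$ via the Calder\'on exponentials and Fourier inversion, recursive reconstruction of the mixed $\eps$-derivatives $w^{(\ell_1\cdots\ell_m)}$ by solving Poisson problems with known data and zero Dirichlet boundary values, isolation of $\p_z^{N+1}a(x,0)$ via the $(N+1)$-th linearization with remainder $R_N$ computable from previous stages, and then integrating against two exponential solutions (with the remaining factors $\equiv 1$) together with Lemma~\ref{lemma_complex_real_solutions}. The only added flourish is the explicit mention of the Fa\`a di Bruno structure to justify that $R_N$ contains no top-order terms, which the paper leaves implicit.
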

\begin{proof}
	
  For $k=2$, the reconstruction formula can be easily obtained by reviewing the argument between the equations \eqref{equ 4a1 in 1st example} and \eqref{2nd ordera1 integral id in Rn}. Formally we have 
	\[
	\widehat{\p_z^2 a}(\cdot,0)(-2 \xi)=-\int_{\p \Omega} \left.\frac{\p^2}{\p \eps_{1} \p \eps_{2}}\right|_{\eps_1=\eps_2=0}\Lambda_a\left(\eps_1 f_1 + \eps_2 f_2\right) \,dS,
	\]
which reconstructs the coefficient $\p_z^2a(x,0)$. Here $\widehat{\p_z^2 a}$ denotes the Fourier transformation of $a(x,z)$ in the $x$-variable, and $f_1$ and $f_2$ are the boundary values of the Calder\'on's exponential solutions~\eqref{calderon_exponential}. More precisely, we can take $f_1$ and $f_2$ to be the real or imaginary parts of the boundary values of the solutions~\eqref{calderon_exponential}, and we can then use a suitable combination as in Lemma \ref{lemma_complex_real_solutions} to recover $\widehat{\p_z^2 a}(\cdot,0)(-2 \xi)$. Moreover, by using \eqref{equ 4a1 in 1st example}, one can solve the boundary value problem \eqref{equ 4a1 in 1st example} uniquely to construct the function $w^{(kl)}$ given by \eqref{2nd ordera1 equal solutions}.

The proof for general $k$ is by recursion, but let us show separately how to reconstruct $\p_z^3a(x,z)$ corresponding to $k=3$.  To reconstruct $\p_z^3 a(x,0)$, we apply third order linearization for the equation 
\begin{align*}
\begin{cases}
\Delta u +a(x,u_j) =0 &\text{ in }\Omega,\\
u =\sum _{\ell=1}^{N+1} \eps_\ell f_\ell  &\text{ on }\p \Omega,
\end{cases}
\end{align*}
at $\eps=(\eps_1,\ldots,\eps_{N+1})=0$, where $\eps_\ell$ are small and $f_\ell\in C^s(\p \Omega)$. For $k=3$, we can take $N$ to be $2$. This shows that
	\begin{align}\label{equ 6a in 1st example}
	&\notag \Delta w^{(ik\ell)}
	+ \p ^2_{z}a(x,0) \left( w^{(ik)}v^{(\ell)} + w ^{(i \ell)}v^{(k)}  +w^{(k\ell )} v^{(i)} \right) \\
	&\quad + \p_z^3 a(x,0) \left( v^{(i)}v^{(k)}v^{(\ell)} \right) =0 & \text{ in }\Omega,
	\end{align}
	holds, where $w^{(ik\ell)}(x)=\frac{\p^3}{\p \eps_i\p \eps_k \p \eps_\ell}u(x;0)$. 
An integration by parts formula now yields that 
\begin{align*}
&\int_{\p \Omega} \left.\frac{\p^3}{\p \eps_i \p \eps_k \p \eps_\ell}\right|_{\eps_i=\eps_k=\eps_\ell =0}\Lambda_a \left(\eps_i f_i + \eps_k f_k + \eps_\ell  f_\ell \right) dS \\
 &\notag +\int_{\Omega }\p ^2_{z}a(x,0) \left( w^{(ik)}v^{(\ell)} + w ^{(i \ell)}v^{(k)}  +w^{(k\ell )} v^{(i)} \right)dx\ \\
= &\notag -\int_{\Omega} \p_z^3 a(x,0)v^{(i)}v^{(k)}v^{(\ell)}dx
\end{align*}
Let $v^{(i)}$ and $v^{(k)}$ be real or imaginary parts of Calder\'on's exponential solutions~\eqref{calderon_exponential} and let $v^{(\ell)}=1$. By using Lemma \ref{lemma_complex_real_solutions} and the fact that we have already reconstructed $\p_z^2 a(x,0)$ and $w^{(k\ell)}$, we see that we can reconstruct the Fourier transform of $\p_z^3 a(x,0)$. Consequently, we know the all the coefficients of the equation~\eqref{equ 6a in 1st example} for $w^{(ik\ell)}$, thus we may solve~\eqref{equ 6a in 1st example} to reconstruct also $w^{(ik\ell)}$. (The boundary value for $w^{(ik\ell)}$ is $0$.)

To reconstruct $\p_z^k a_j(x,0)$ for any $k\in \N$, one proceeds recursively. Let us assume that we have reconstructed $\p_z^ka(x,0)$ and $w^{(\ell_1\cdots \ell_{k})}$ for all $k\leq N$. The linearization of order $N+1$ then yields that (cf.~\eqref{equ 7a1 in 1st example})
%
\begin{align}\label{Nthreconstruction}
\begin{cases}
\Delta w^{(\ell_1\cdots \ell_{N+1})} +R_N(u,a,0)+ \p_z^{N+1} a(x,0)\left( \Pi _{k =1}^{N+1} v^{(\ell_k)} \right)=0 &\text{ in }\Omega,\\
w^{(\ell_1\cdots \ell_{N+1})}  =0 & \text{ on }\p \Omega,
\end{cases}
\end{align}
where $w^{(\ell_1\cdots \ell_{N+1})}(x)=\p^{N+1}_{\eps_{\ell_1} \cdots \eps_{\ell_{N+1}}}u(x,0)$ and where $R_N(u,a,0)$ is a polynomial of the functions $\p_z^{k}a(x,0)$ and $\p^{k}_{\eps_{\ell_1}\cdots \eps_{\ell_{k}}}u(x,0)$ for $k\leq N$. By the recursion assumption we have thus already recovered $R_N(u,a,0)$. Finally, integrating by parts shows that
\begin{align*}
	&\int_{\p \Omega} \frac{\p^{N+1}}{\p \eps_{\ell_1}\cdots \p \eps_{\ell_{N+1}}}\Bigg|_{\eps=0}\Lambda_a\left(\sum_{k=1}^{N+1} \eps_{\ell_k}f_{\ell_k} \right)dS + \int_\Omega R_N(u,a,0)\,dx \\
	= &-\int_\Omega  \p_z^L a(x,0)\left( \Pi _{\ell =1}^L v^{(i_\ell)} \right)dx.
\end{align*}
We choose $v^{(\ell_1)}$, $v^{(\ell_2)}$ to be real or imaginary parts of exponential solutions \eqref{calderon_exponential} and $v^{(\ell_3)}=\cdots=v^{(\ell_{N+1})}=1$ in $\Omega$. Using Lemma \ref{lemma_complex_real_solutions} again, this recovers $\p_z^{N+1} a(x,0)$. To end the reconstruction argument, we insert the now reconstructed $\p_z^{N+1} a(x,0)$ into~\eqref{Nthreconstruction} and solve the equation for $w^{(\ell_1\cdots \ell_{N+1})}$ with zero Dirichlet boundary value.
\end{proof}

\section{Simultaneous recovery of cavity and coefficients}\label{Section 3}
We prove Theorem \ref{thm: Nonlinear Schiffer's problem} by first recovering the cavity $D$ from the first linearization of the equation
\[
 \Delta u(x)+a(x,u)=0.
\]
After that the function $a=a(x,z)$ is recovered by higher order linearization.

\begin{proof}[Proof of Theorem \ref{thm: Nonlinear Schiffer's problem}]
	Let $f=\sum_{\ell=1}^{N+1}\eps_\ell f_\ell$, where $\eps_\ell$ are sufficiently small numbers and let $f_\ell \in C^s(\p \Omega)$ for all $\ell =1,2,\cdots, N +1$. We denote $\epsilon=(\eps_1 ,\ldots,\eps_{N+1})$ and let $u_j(x)=u_j(x;\epsilon)$ be the solution of 
	\begin{align}\label{Schiffer equation proof}
	\begin{cases}
		\Delta u_j +a_j (x,u_j)=0 & \text{ in }\Omega \setminus \overline{D_j}, \\
		u_j =0 & \text{ on }\p D_j,\\
		u_j =f & \text{ on }\p \Omega
	\end{cases}
	\end{align}
	with $f =\sum_{\ell=1}^{N+1}\eps_\ell f_\ell$, $j=1,2$.
	
	\vspace{10pt}
	
	{\it Step 1. Recovering the cavity.}\\
	
	\noindent  Let us differentiate \eqref{Schiffer equation proof} with respect to $\eps_{\ell}$, for $\ell =1,\cdots, N+1$. We obtain 
	\begin{align}\label{Schiffer 1st equ}
	\begin{cases}
	\Delta  \left( \frac{\p}{\p \eps_\ell}u_j \right)+\p_z a_j(x,u_j)\left( \frac{\p}{\p \eps_\ell}u_j \right)=0 & \text{ in }\Omega \setminus \overline{D_j},\\
    \frac{\p}{\p \eps_\ell}u_j =0 & \text{ on }\p D_j, \\
	 \frac{\p}{\p \eps_\ell}u_j =f_\ell & \text{ on }\p \Omega,
	\end{cases}
	\end{align}
	for all $\ell =1,2,\cdots, N+1$ and $j =1,2$. Note that by \eqref{condition of a}, the function $u_j(x;0)$ solves \eqref{Schiffer equation proof} with zero Dirichlet condition $\p \Omega$ and $\p D_j$. Thus we have $u_j(x;0)\equiv 0 $ in $\Omega \setminus \overline{D_j}$, for $j=1,2$. By letting $\epsilon=0$ and by denoting $v_j^{(\ell)}(x):= \frac{\p}{\p \eps_\ell}\big|_{\epsilon =0}u_j $, the equation \eqref{Schiffer 1st equ} becomes 
	\begin{align}\label{one-cavity problem}
	\begin{cases}
	\Delta  v_j^{(\ell)}=0 & \text{ in }\Omega \setminus \overline{D_j},\\
	v_j^{(\ell)} =0 & \text{ on }\p D_j, \\
	v_j^{(\ell)}=f_\ell & \text{ on }\p \Omega.
	\end{cases}
	\end{align}
	
	We show that $D_1=D_2$. This follows by a standard argument (see for instance \cite{beretta1998cavity, alessandrini2000optimal}), but we include a proof for completeness.
	Let $G$ be the connected component of $\Omega \setminus (\overline{D_1 \cup D_2})$ whose boundary contains $\p \Omega$ and let $\widetilde{v}^{(\ell)}:= v_1^{(\ell)}-v_2^{(\ell)}$. Then $\widetilde{v}^{(\ell)}$ solves 
	\begin{align*}
		\begin{cases}
		\Delta \widetilde{v}^{(\ell)} =0 & \text{ in } G, \\
		\widetilde{v}^{(\ell)}=\p_\nu \widetilde v^{(\ell)}=0 & \text{ on }\p \Omega
		\end{cases}
	\end{align*}
	since $\Lambda_{a_1}^{D_1}(f)= \Lambda_{a_2}^{D_2}(f) \text{ on }\p \Omega$ for small $f$. By the unique continuation principle for harmonic functions, one has that $\widetilde{v}^{(\ell)}=0$ in $G$. Thus 
 \begin{equation}\label{equalsontilde}
v_1^{(\ell)}=v_2^{(\ell)} \text{ in } G,
 \end{equation}
 for $\ell =1,\ldots, N+1$. In order to prove the uniqueness of the cavity, $D_1=D_2$, one needs only to consider the case $\ell=1$ of the problem \eqref{one-cavity problem}. However, we need to consider all $\ell =1,\cdots,N+1$ to recover the coefficient $a(x,z)$.
	
	We now argue by contradiction and assume that $D_1 \neq D_2$. The next step is to apply Lemma \ref{lemma_boundary_intersection} in the appendix with the choices $\Omega_1 = \Omega \setminus \overline{D_1}$, $\Omega_2 = \Omega \setminus \overline{D_2}$ and $\Gamma = \p \Omega$ (note that $\Omega_j$ and $\Gamma$ are connected by our assumptions). It follows, after interchanging $D_1$ and $D_2$ if necessary, that there exists a point $x_1$ such that 
	\[
	x_1\in \p G \cap (\Omega\setminus \overline{D_1}) \cap \p D_2.
	\]
	Since $x_1 \in \p D_2$, we have $v_2^{(\ell)}(x_1)=0$. By~\eqref{equalsontilde} and continuity, we also have that $v_1^{(\ell)}(x_1)=0$. The point $x_1$ is an interior point of the open set $\Omega\setminus \overline{D_1}$. Let us fix one of the boundary values $f_\ell$ to be non-negative and not identically $0$. 
	Now, since $v_1^{(\ell)}(x_1)=0$, the maximum principle implies that $v_1^{(\ell)}\equiv 0$ in the connected open set $\Omega \setminus \overline{D_1}$. This is in contradiction with the assumption that
	$v_1^{(\ell)}=f_\ell$ on $\p \Omega$  is non-vanishing (since $v_1^{(\ell)}$ is continuous up to boundary). This shows that $D:=D_1=D_2$. Moreover, we have by~\eqref{equalsontilde} that 
	\begin{equation}\label{cavity lins}
	v^{(\ell)}:=v_1^{(\ell)}=v_2^{(\ell)} \text{ in } \Omega \setminus \overline{D},
    \end{equation}
     for all $\ell =1,\cdots , N+1$ as desired. 
     
		\vspace{10pt}
	
	{\it Step 2. Recovering the coefficient.}\\
	
	\noindent  In order to prove the claim
	\begin{equation}\label{schiffer_claim}
	\p _z^k a_1(x,0) = \p _z^k a_2(x,0), \quad k \in \N
	 \end{equation}
	 of the theorem, we proceed by induction similar to the proof of Theorem \ref{thm uniqueness of a}. The equation~\eqref{schiffer_claim} is true for $k=1$ by assumption. Assume that~\eqref{schiffer_claim} holds for $k\leq N$, and assume also that
	 \begin{equation}\label{derivs up to N} 
	 \p^{k}_{\eps_{\ell_1}\cdots \eps_{\ell_{k}}}u_1(x,0)=\p^{k}_{\eps_{\ell_1}\cdots \eps_{\ell_{k}}}u_2(x,0)
	  \text{ for all } k \leq N.
	 \end{equation}
	 This equation holds for $k=1$ by~\eqref{cavity lins}.
	 
	By differentiating $N+1$ times the equation~\eqref{Schiffer equation proof} with respect to the parameters $\eps_{\ell_1},\ldots , \eps_{\ell_{N+1}}$ for $j=1,2$, and by subtracting the results from each other shows that in $\Omega\setminus \overline{D}$ one has 
	\begin{align}\label{equ 7 in 2nd example}
	&  \Delta \p^{N+1}_{\eps_{\ell_1}\cdots \eps_{\ell_{N+1}}}(u_1(x,0)-u_2(x,0))
	+ \p_z^{N+1} \left(a_1(x,0)-a_2(x,0)\right)\left( \Pi _{k =1}^{N+1} v^{(\ell_k)} \right)=0
	\end{align}
	 Here we used~\eqref{schiffer_claim} for $k \leq N$ and~\eqref{derivs up to N} to deduce that the terms with derivatives of order $\leq N$ vanish in the subtraction. We also have $\p^{N+1}_{\eps_{\ell_1}\cdots \eps_{\ell_{N+1}}}(u_1(x,0)-u_2(x,0))=0$ 
	 on $\p \Omega \cup \p D$.
	 
	 We know the DN map only on $\p \Omega$, but not on $\p D$. Therefore integrating~\eqref{equ 7 in 2nd example} and using integration by parts would produce an unknown integral over $\p D$. To compensate for the lack of knowledge on $\p D$, we proceed as follows. Let $v^{(0)}$ be the solution of 
	 \begin{align}\label{harmonic_function_partial_data}
	 	\begin{cases}
	 	\Delta v^{(0)}=0 & \text{ in }\Omega \setminus \overline{D}, \\
	 	v^{(0)}=0 & \text{ on }\p D, \\
	 	v^{(0)}=1 &\text{ on }\p \Omega. 
	 	\end{cases}
	 \end{align}
	By the maximum principle and by the fact that $\Omega \setminus \overline{D}$ is connected, we have that $v^{(0)}>0$ in $\Omega \setminus \overline{D}$.
	Multiplying the equation \eqref{equ 7 in 2nd example} by the function $v^{(0)}$, and then integrating the resulting equation yields  
	\begin{align}\label{integral id for generalaa}
	\begin{split}
	0= &  \int_{\p (\Omega \setminus \overline{D})}v^{(0)}\p _\nu \left(w_2^{(\ell_1\cdots\ell_{N+1})}-w_1^{(\ell_1\cdots\ell_{N+1})}\right)dS \\
	=& \int_{\Omega \setminus \overline{D}}v^{(0)}\Delta\left(w_2^{(\ell_1\cdots\ell_{N+1})}-w_1^{(\ell_1\cdots\ell_{N+1})}\right) dx \\
	& \quad+\int_{\Omega \setminus \overline{D}}\nabla v^{(0)}\cdot \nabla \left(w_2^{(\ell_1\cdots\ell_{N+1})}-w_1^{(\ell_1\cdots\ell_{N+1})}\right)dx\\
	= & \int_{\Omega\setminus\overline{D}}\p_z^{N+1} \left( a_1(x,0)-a_2(x,0) \right)\left( \Pi _{k =1}^{N+1} v^{(\ell_k)} \right)v^{(0)}\,dx \\
	& \quad+ \int_{\p (\Omega \setminus \overline{D})}\p _\nu v^{(0)}\left(w_2^{(\ell_1\cdots\ell_{N+1})}-w_1^{(\ell_1\cdots\ell_{N+1})}\right)dS \\
	= &\int_{\Omega\setminus\overline{D}}\p_z^{N+1} \left( a_1(x,0)-a_2(x,0) \right)\left( \Pi _{k =1}^{N+1} v^{(\ell_k)} \right)v^{(0)}\,dx,
	\end{split}
	\end{align}
	where we denoted $w_j^{(\ell_1\cdots\ell_{N+1})}(x)=\p^{N+1}_{\eps_{\ell_1}\cdots \eps_{\ell_{N+1}}}u_j(x,0)$ for $j=1,2$. In the first equality we used $v^{(0)}=0$ 
	on $\p D$ and the assumption that the DN maps agree on $\p \Omega$. In the second to last equality we used the fact that $v^{(0)}$ is harmonic. 
	
	Now, let us choose the boundary values as $f_3=f_4=\cdots=f_{N+1}=1$ on $\p \Omega$. With these boundary values the corresponding functions $v^{(\ell_k)}$ are harmonic functions in $\Omega \setminus \overline{D}$ with $v^{(\ell_k)}=1$ on $\p \Omega$ and $v^{(\ell_k)}=0$ on $\p D$. By the maximum principle, we have $0<v^{(\ell_k)}<1$ in $\Omega\setminus \overline{D}$ for $3\leq k \leq N+1$.
	By \cite[Theorem 1.1]{ferreira2009linearized} we can find special complex valued harmonic functions in $\Omega \setminus \overline{D}$ whose boundary values vanish on $\p D$ so that the products of pairs of these harmonic functions 
	form a complete subset in $L^1(\Omega)$. We use real and imaginary parts of these special harmonic functions as $v^{(\ell_1)}$ and $v^{(\ell_2)}$. From the integral identity \eqref{integral id for generalaa} and Lemma \ref{lemma_complex_real_solutions}, we conclude that $(q_1-q_2)v^{(0)}v^{(\ell_3)}v^{(\ell_4)}\cdots v^{(\ell_{N+1})}=0$ in $\Omega\setminus \overline{D}$. Since $v^{(\ell_k)}$ and $v^{(0)}$ are positive in $\Omega\setminus \overline{D}$ for $3\leq k \leq N+1$, this implies $q_1=q_2$ in $\Omega \setminus \overline{D}$ as desired.
\end{proof}

\section{Simultaneous recovery of boundary and coefficients}\label{Section 4}
We prove Theorem \ref{thm: partial data} by a similar method that we proved the Theorem~\ref{thm: Nonlinear Schiffer's problem}.

\begin{proof}[Proof of Theorem \ref{thm: partial data}]
%
We consider boundary data of the form $f=\sum_{\ell=1}^{N+1}\eps_\ell f_\ell$, where $\eps_\ell$ are small numbers and $f_\ell \in C^s_c(\Gamma)$ for all $\ell =1,\ldots, N+1$. Denote $\epsilon=(\eps_1 ,\cdots,\eps_{N+1})$. 
Let $u_j(x)=u_j(x;\epsilon)$, $j=1,2$, be the solution of
 \begin{align}\label{partial data equation}
	\begin{cases}
		\Delta u_j +a_j(x,u)=0 & \text{ in }\Omega_j, \\
		u_j =0 & \text{ on }\p \Omega_j\setminus \Gamma,\\
		u_j =f & \text{ on }\Gamma.
	\end{cases}
	\end{align}
	Note that by decreasing $\Gamma$ is necessary, we can assume that $\Gamma$ is connected.
	
	\vspace{10pt}
	
	{\it Step 1. Reconstruction of the boundary.}\\
	
	\noindent  
	By differentiating \eqref{partial data equation} with respect to $\eps_{\ell}$ for $\ell \in \N $, we obtain 
	\begin{align*}
		\begin{cases}
		\Delta  \left( \frac{\p}{\p \eps_\ell}u_j \right)+ \p_za(x,u_j) \left( \frac{\p}{\p \eps_\ell}u_j \right)=0 & \text{ in }\Omega_j,\\
		\frac{\p}{\p \eps_\ell}u_j =0 & \text{ on }\p \Omega_j\setminus \Gamma, \\
		\frac{\p}{\p \eps_\ell}u_j =f_\ell & \text{ on }\Gamma,
		\end{cases}
	\end{align*}
	for $j =1,2$. 
	By letting $\eps=0$ and using $u_j(x;0)=0$, we have that $v_j^{(\ell)}:= \left. \frac{\p}{\p \eps_\ell}u_j \right|_{\epsilon =0}$ solves: 
	\begin{align*}
		\begin{cases}
		\Delta  v_j^{(\ell)}=0 & \text{ in }\Omega_j,\\
		 v_j^{(\ell)} =0 & \text{ on }\p \Omega_j\setminus \Gamma, \\
		v_j^{(\ell)}=f_\ell & \text{ on }\Gamma,
		\end{cases}
	\end{align*}
	for $j =1,2$ and $ \ell =1,\cdots, N+1$.  
	Let $G$ be the connected component of $\Omega_1 \cap \Omega_2$ whose boundary contains the set $\Gamma$.
	Let $\widetilde{v}^{(\ell)}:=v_1^{(\ell)}-v_2^{(\ell)}$ in the domain  $G$. Then, using that $\Lambda_{a_1}^{\Omega_1,\Gamma}(f)= \Lambda_{a_2}^{\Omega_2, \Gamma}(f) \text{ on }\Gamma$ for small $f \in C^s_c(\Gamma)$, the function $\widetilde{v}^{(\ell)}$ solves  
	\begin{align*}
	\begin{cases}
	\Delta \widetilde{v}^{(\ell)} =0 & \text{ in } G,\\
	\widetilde{v}^{(\ell)} = \p _\nu \widetilde{v}^{(\ell)}=0 & \text{ on }\Gamma.
	\end{cases}
	\end{align*} 
	Then by the unique continuation principle for harmonic functions, we have that $\widetilde{v}^{(\ell)}=0$ in $G$. In other words, $v_1^{(\ell)}=v_2^{(\ell)}$ in $G$ for all $\ell =1,\cdots, N+1$. We remark that as in Section \ref{Section 3}, one only needs one harmonic function $v^{(1)}$ to recover the unknown boundary. For the coefficients, we still need many harmonic functions. Let us choose on the functions $f_\ell\in C_c^s(\Gamma)$ to be non-negative and not identically zero.
	
    If $\Omega_1\not =\Omega_2$, we can use Lemma \ref{lemma_boundary_intersection} in the appendix to conclude that (possibly after interchanging $\Omega_1$ and $\Omega_2$)
	there is a point $x_1$ with 
	\[
	x_1\in \p G \cap \Omega_1\cap (\p \Omega_2\setminus \Gamma).
	\]
	Since $x_1\in \p \Omega_2\setminus \Gamma$, it follows that
	$v_2^{(\ell)}(x_1)=0$. As $x_1$  is an interior point of the connected open set $\Omega_1$ and the boundary value of $v_2^{(\ell)}$  is non-negative,
	the maximum principle implies that $v_2^{(\ell)}\equiv0$ in $\Omega_1$. This is in contradiction with the assumption that
	$f_{\ell}$  is not identically zero. This shows that $\Omega_1=\Omega_2$. Furthermore, by denoting $\Omega := \Omega_1 = \Omega_2$, we have that $v^{(\ell)}:=v_1^{(\ell)}=v_2^{(\ell)}$ in $\Omega$ for $\ell =1,\cdots, N+1$.

	 \vspace{10pt}
	
	{\it Step 2. Reconstruction of  the coefficient.}\\
	
	\noindent The reconstruction of the Taylor series of $a(x,z)$ at $z=0$ is similar to Step $2$ in the proof of Theorem \ref{thm: Nonlinear Schiffer's problem}. First one shows by higher order linearization and by induction that the equation~\eqref{equ 7 in 2nd example} holds in $\Omega$. After that one constructs a harmonic function that vanishes on $\p \Omega \setminus \Gamma$ and which is positive on $\Gamma$. This is similar to the construction of $v^{(0)}$  in~\eqref{harmonic_function_partial_data}. The maximum principle shows that the constructed harmonic function is positive in $\Omega$. Integrating by parts as in~\eqref{integral id for generalaa} and using \cite[Theorem 1.1]{ferreira2009linearized} finishes the proof.
\end{proof}

\appendix

\section{} \label{sec_appendix}

Here we give a proof of a standard lemma (see e.g.\ \cite{beretta1998cavity}) that was used for recovering an unknown cavity or an unknown part of the boundary.

\begin{lem} \label{lemma_boundary_intersection}
Let $\Omega_1, \Omega_2 \subset \R^n$ be bounded connected open sets with $C^{\infty}$ boundaries, and assume that $\Gamma$ is a nonempty connected open subset of $\p \Omega_1 \cap \p \Omega_2$. Let $G$ be the connected component of $\Omega_1 \cap \Omega_2$ whose boundary contains $\Gamma$. If 
\[
\Omega_1 \neq \Omega_2,
\]
then, after interchanging $\Omega_1$ and $\Omega_2$ if necessary, one has 
\[
\p G \cap \Omega_1 \cap (\p \Omega_2 \setminus \Gamma) \neq \emptyset.
\]
\end{lem}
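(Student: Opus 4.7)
The plan is to argue by contradiction on the symmetric statement: suppose that \emph{both}
\[
\partial G \cap \Omega_1 \cap (\partial \Omega_2 \setminus \Gamma) = \emptyset
\quad \text{and} \quad
\partial G \cap \Omega_2 \cap (\partial \Omega_1 \setminus \Gamma) = \emptyset,
\]
and derive that $\Omega_1 = \Omega_2$. The starting observation is that, since $G$ is a connected component of the open set $\Omega_1 \cap \Omega_2$, it is itself open, and every point of $\partial G$ is a limit of points in $G$ that fails to lie in $\Omega_1 \cap \Omega_2$. Hence
\[
\partial G \;\subset\; \partial(\Omega_1 \cap \Omega_2) \;\subset\; \partial \Omega_1 \cup \partial \Omega_2.
\]

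Next I would make the key elementary remark: if $x \in \Omega_1 \cap \partial \Omega_2$, then $x$ lies in the open set $\Omega_1$ and therefore cannot belong to $\partial \Omega_1$; in particular $x \notin \Gamma$. Combined with the first contradiction assumption, this yields $\partial G \cap \Omega_1 \cap \partial \Omega_2 = \emptyset$. Now take any $x \in \partial G$. Since $x$ is a limit of points in $G \subset \Omega_1$, we have $x \in \overline{\Omega_1}$. Were $x \notin \partial \Omega_1$, we would conclude $x \in \Omega_1$, and because $\partial G \subset \partial \Omega_1 \cup \partial \Omega_2$ forces $x \in \partial \Omega_2$, this would contradict the previous display. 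Therefore $\partial G \subset \partial \Omega_1$, and in particular $\partial G \cap \Omega_1 = \emptyset$.

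From $\partial G \cap \Omega_1 = \emptyset$ I would deduce that $G$ is closed as a subset of $\Omega_1$: any point of $\Omega_1 \setminus G$ has no neighbourhood in $\overline{G}$, so it has a neighbourhood in $\Omega_1$ disjoint from $G$. Thus $G$ is a nonempty clopen subset of the connected open set $\Omega_1$ (nonempty because $\Gamma \subset \partial G$ is nonempty), whence $G = \Omega_1$. The symmetric argument, using the second assumption, gives $G = \Omega_2$, and therefore $\Omega_1 = \Omega_2$, contradicting the hypothesis.

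I do not anticipate a genuine obstacle here; the only care needed is bookkeeping the elementary point-set inclusions $\partial G \subset \partial \Omega_1 \cup \partial \Omega_2$ and the clopen argument in $\Omega_1$, together with the remark that $\Omega_1 \cap \partial \Omega_2$ is automatically disjoint from $\Gamma \subset \partial \Omega_1$, which is what lets the ``$\setminus \Gamma$'' in the hypothesis be harmlessly absorbed.
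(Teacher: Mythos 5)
Your proof is correct and reaches the same ultimate mechanism as the paper's — a clopen/disconnection argument in $\Omega_1$ — but by a genuinely cleaner route. The paper fixes a WLOG $\Omega_1 \setminus \Omega_2 \neq \emptyset$, then manipulates $\p(\Omega_1 \setminus G)$ through a chain of set identities to establish the inclusion $\p(\Omega_1 \setminus G) \subset \{\p G \cap (\p\Omega_2 \setminus \Gamma)\} \cup (\p\Omega_1 \setminus \Gamma)$, intersects with $\Omega_1$, and finally disconnects $\Omega_1$ using $V = \mathrm{int}(\Omega_1 \setminus G)$ and its exterior. You instead negate both symmetric conclusions simultaneously, observe that the $\setminus\Gamma$ in the hypothesis is redundant once one intersects with $\Omega_1$ (since $\Omega_1 \cap \p\Omega_2$ is automatically disjoint from $\Gamma \subset \p\Omega_1$), deduce $\p G \subset \p\Omega_1$ directly, and conclude $G$ is a nonempty clopen subset of the connected $\Omega_1$, hence $G = \Omega_1$; the symmetric half gives $G = \Omega_2$. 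What your version buys: the awkward $\p(\Omega_1 \setminus G)$ computation disappears, the WLOG step becomes a symmetric two-sided hypothesis, and the clopen argument is applied to $G$ itself rather than to the complementary open set $V$. The one small bookkeeping point you gloss over — that $\p G \subset \p\Omega_1 \cup \p\Omega_2$ requires knowing $\overline{G} \cap (\Omega_1 \cap \Omega_2) \subset G$ (which holds because $G$ is a connected component) — is the same point the paper makes explicitly in its display deriving $\p G \subset \p(\Omega_1 \cap \Omega_2)$, so it is worth spelling out, but it is not a gap.
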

\begin{proof}
	
	Without loss of generality, we may assume that $\Omega_1\setminus {\Omega_2}\neq \emptyset$. We claim that we then have the inclusion relation 	\begin{align}\label{inclusion relation}
	\p (\Omega_1 \setminus G)\subset \left\{ \p G \cap  (\p \Omega_2\setminus \Gamma)  \right\}\cup (\p \Omega_1 \setminus \Gamma).
	\end{align}
	First, we prove \eqref{inclusion relation}. Using the fact that $\p E = \overline{E} \cap \left( \overline{\R^n \setminus E} \right)$ for any $E \subset \R^n$, and using that $A \setminus (B \setminus C) = (A \setminus B) \cup (A \cap C)$ and $\overline{A \cup B} = \overline{A} \cup \overline{B}$, one has 
	\begin{align*}
	\p (\Omega_1\setminus G)=& \left(\overline{\Omega_1 \setminus G} \right) \cap \left( \overline{\R^n \setminus (\Omega_1 \setminus G)} \right) \\
	 =& \left(\overline{\Omega_1 \setminus G} \right) \cap \left( \overline{(\R^n \setminus \Omega_1) \cup (\R^n \cap G)} \right) \\
	=& \left( \overline{\Omega_1 \setminus G} \cap \overline{\R^n \setminus \Omega_1} \right) \cup \left( \overline{\Omega_1 \setminus G}\cap \overline{G} \right) \\
	\subset & \left( \p \Omega_1 \setminus \Gamma  \right) \cup \left(   \p G \setminus \Gamma \right).
	\end{align*}
	Here we used that $(\overline{\Omega_1 \setminus G}) \cap \Gamma = \emptyset$. Next, one has $\ol{G} \cap (\Omega_1 \cap \Omega_2) \subset G$ (since any component of $\Omega_1 \cap \Omega_2$ that meets $\ol{G}$ must be equal to $G$), and thus we have 
	\[
	\p G = \ol{G} \setminus G \subset \ol{G} \setminus (\Omega_1 \cap \Omega_2) \subset \left( \ol{\Omega_1 \cap \Omega_2} \right) \setminus (\Omega_1 \cap \Omega_2) = \p (\Omega_1 \cap \Omega_2) \subset \p \Omega_1 \cup \p \Omega_2.
	\]
	It follows that $\p G\setminus \Gamma = \left\{ (\p \Omega_1 \cup \p \Omega_2)\cap \p G\right\}\setminus \Gamma$. Combining the above facts, we have proved \eqref{inclusion relation}.
	
	Next, by the above inclusion relation \eqref{inclusion relation}, it is easy to see that  
	\begin{align}\label{inclusion relation 2}
	\begin{split}
	\p (\Omega_1\setminus G)\cap \Omega_1 
	\subset &\left\{ \left(\p G \cap  (\p \Omega_2\setminus \Gamma)  \right) \cup (\p \Omega_1 \setminus \Gamma) \right\} \cap \Omega_1\\
	= & \left\{(\p G \cap  (\p \Omega_2\setminus \Gamma) \cap \Omega_1 \right\}\cup \left\{(\p \Omega _1\setminus \Gamma) \cap \Omega_1 \right\}\\
	=& \p G \cap \Omega_1 \cap (\p \Omega_2 \setminus \Gamma),
	\end{split}
	\end{align}
	where we have used that $\Omega_1$ is a bounded open set such that $(\p \Omega_1 \setminus \Gamma )\cap \Omega_1 = \emptyset$.
	
	We will now show that $\p G \cap \Omega_1 \cap (\p \Omega_2 \setminus \Gamma) \neq \emptyset$.
	Suppose that this is not true, i.e., $\p G \cap \Omega_1 \cap (\p \Omega_2 \setminus \Gamma) =  \emptyset$, then \eqref{inclusion relation 2} implies that 
	\begin{align}\label{emptyset}
		\p (\Omega_1\setminus G)\cap \Omega_1 =\emptyset.
	\end{align}
	Note that the following facts hold:
    \begin{gather}
    (\Omega_1\setminus G)\cap \Omega_1\neq \emptyset, \label{inclusion relation 3} \\
    \left\{ \R^n \setminus (\Omega_1 \setminus G)\right\}\cap \Omega_1\neq \emptyset. \label{inclusion relation 4}
    \end{gather}
	These facts are proved as follows.
    For \eqref{inclusion relation 3}, we have $(\Omega_1\setminus G)\cap \Omega_1 = \Omega_1 \setminus G$. If $\Omega_1 \setminus G=\emptyset$, we have $\Omega_1 \subset G$. However, by using the definition of $G$, we have that $G\subset \Omega_1 \cap \Omega_2\subset \Omega_1$, which implies that $\Omega_1=\Omega_1 \cap \Omega_2$.
    This violates our assumption that $\Omega_1\setminus \Omega_2 \neq \emptyset$. Thus we must have $\Omega _1 \setminus G \neq \emptyset$.
	Similarly, for \eqref{inclusion relation 4}, we can also obtain that 
	\begin{align*}
	\left\{\R^n \setminus (\Omega_1 \setminus G)\right\} \cap \Omega_1=\left\{ (\R^n\setminus \Omega_1)\cup G\right\} \cap \Omega_1=G \cap \Omega_1 \neq \emptyset.
	\end{align*}
	
	Finally, writing $V = \mathrm{int}(\Omega_1 \setminus G)$ and using \eqref{emptyset}--\eqref{inclusion relation 4}, we obtain that 
	\begin{gather*}
	V \cap \Omega_1 \neq \emptyset, \\
	(\R^n \setminus \ol{V}) \cap \Omega_1 \neq \emptyset.
	\end{gather*}
	Using \eqref{emptyset} again in the form $\p V \cap \Omega_1 = \emptyset$, we may decompose $\Omega_1$ as 
	\[
	\Omega_1= (V \cap \Omega_1) \cup \left\{ (\R^n \setminus \ol{V}) \cap \Omega_1  \right\}.
	\]
	Since $V$ is open, this implies that $\Omega_1$ can be written as the union of two nonempty disjoint open sets. This contradicts the assumption that $\Omega_1\subset \R^n$ is a connected set. Therefore, $\p G\cap \Omega_1 \cap (\p \Omega_2 \setminus \Gamma)$ must be a nonempty set, which completes the proof of Lemma \ref{lemma_boundary_intersection}.
	\end{proof}

\bibliographystyle{alpha}
\bibliography{ref}

\begin{thebibliography}{dHUW18}

\bibitem[ABRV00]{alessandrini2000optimal}
Giovanni Alessandrini, Elena Beretta, Edi Rosset, and Sergio Vessella.
\newblock Optimal stability for inverse elliptic boundary value problems with
  unknown boundaries.
\newblock {\em Annali della Scuola Normale Superiore di Pisa-Classe di
  Scienze}, 29(4):755--806, 2000.

\bibitem[AP06]{astala2006calderon}
Kari Astala and Lassi P{\"a}iv{\"a}rinta.
\newblock Calder{\'o}n's inverse conductivity problem in the plane.
\newblock {\em Annals of Mathematics}, pages 265--299, 2006.

\bibitem[AR05]{AR2005determining}
Giovanni Alessandrini and Luca Rondi.
\newblock Determining a sound-soft polyhedral scatterer by a single far-field
  measurement.
\newblock {\em Proceedings of the American Mathematical Society},
  133(6):1685--1691, 2005.

\bibitem[BHKS18]{branderetal_monotonicity_plaplace}
Tommi Brander, Bastian Harrach, Manas Kar, and Mikko Salo.
\newblock Monotonicity and enclosure methods for the {$p$}-{L}aplace equation.
\newblock {\em SIAM J. Appl. Math.}, 78(2):742--758, 2018.

\bibitem[BTW17]{blasten2017singular}
Eemeli {Bl{\r{a}}sten}, Leo {Tzou}, and Jenn-Nan {Wang}.
\newblock {Uniqueness for the inverse boundary value problem with singular
  potentials in 2D}.
\newblock {\em arXiv e-prints}, page arXiv:1704.06397, Apr 2017.

\bibitem[BU02]{bukhgeim2002partial}
Alexander~L. Bukhgeim and Gunther Uhlmann.
\newblock Recovering a potential from partial {C}auchy data.
\newblock {\em Comm. Partial Differential Equations}, 27(3-4):653--668, 2002.

\bibitem[Buk08]{bukhgeim2008recovering}
Alexander Bukhgeim.
\newblock Recovering a potential from {C}auchy data in the two-dimensional
  case.
\newblock {\em Journal of Inverse and Ill-posed Problems, V}, 16(1):19--33,
  2008.

\bibitem[BV99]{beretta1998cavity}
E.~Beretta and S.~Vessella.
\newblock Stable determination of boundaries from {C}auchy data.
\newblock {\em SIAM J. Math. Anal.}, 30(1):220--232, 1999.

\bibitem[Cal80]{calderon2006inverse}
Alberto~P. Calder\'{o}n.
\newblock On an inverse boundary value problem.
\newblock In {\em Seminar on {N}umerical {A}nalysis and its {A}pplications to
  {C}ontinuum {P}hysics ({R}io de {J}aneiro, 1980)}, pages 65--73. Soc. Brasil.
  Mat., Rio de Janeiro, 1980.
\newblock Reprinted in Computational \& Applied Mathematics 25 (2016), no.
  2--3, 133--138.

\bibitem[CK12]{colton2012inverse}
David Colton and Rainer Kress.
\newblock {\em Inverse acoustic and electromagnetic scattering theory},
  volume~93.
\newblock Springer Science \& Business Media, 2012.

\bibitem[CLL19]{cao2017simultaneously}
Xinlin Cao, Yi-Hsuan Lin, and Hongyu Liu.
\newblock Simultaneously recovering potentials and embedded obstacles for
  anisotropic fractional {S}chr\"odinger operators.
\newblock {\em Inverse Problems and Imaging}, 13:197--210, 2019.

\bibitem[CLOP19]{chen2019detection}
Xi~Chen, Matti Lassas, Lauri Oksanen, and Gabriel~P Paternain.
\newblock Detection of {H}ermitian connections in wave equations with cubic
  non-linearity.
\newblock {\em arXiv preprint arXiv:1902.05711}, 2019.

\bibitem[CY03]{CY2003uniqueness}
Jin Cheng and Masahiro Yamamoto.
\newblock Uniqueness in an inverse scattering problem within non-trapping
  polygonal obstacles with at most two incoming waves.
\newblock {\em Inverse Problems}, 19(6):1361, 2003.

\bibitem[dHUW18]{de2018nonlinear}
Maarten de~Hoop, Gunther Uhlmann, and Yiran Wang.
\newblock Nonlinear interaction of waves in elastodynamics and an inverse
  problem.
\newblock {\em Mathematische Annalen}, pages 1--31, 2018.

\bibitem[FKSU09]{ferreira2009linearized}
D~Ferreira, Carlos~E Kenig, Johannes Sj{\"o}strand, and Gunther Uhlmann.
\newblock On the linearized local {C}alder{\'o}n problem.
\newblock {\em Mathematical Research Letter}, 16(6):955--970, 2009.

\bibitem[FO19]{FO2019semilinear}
Ali Feizmohammadi and Lauri Oksanen.
\newblock An inverse problem for a semi-linear elliptic equation in
  {R}iemannian geometries.
\newblock {\em arXiv preprint arXiv:1904.00608}, 2019.

\bibitem[HNS13]{HNS2013analytic}
Naofumi Honda, Gen Nakamura, and Mourad Sini.
\newblock Analytic extension and reconstruction of obstacles from few
  measurements for elliptic second order operators.
\newblock {\em Mathematische Annalen}, 355(2):401--427, 2013.

\bibitem[IN95]{VictorN}
Victor Isakov and A~Nachman.
\newblock Global uniqueness for a two-dimensional elliptic inverse problem.
\newblock {\em Trans.of AMS}, 347:3375--3391, 1995.

\bibitem[IS94]{isakov1994global}
Victor Isakov and John Sylvester.
\newblock Global uniqueness for a semilinear elliptic inverse problem.
\newblock {\em Communications on Pure and Applied Mathematics},
  47(10):1403--1410, 1994.

\bibitem[Isa93]{isakov1993uniqueness}
Victor Isakov.
\newblock On uniqueness in inverse problems for semilinear parabolic equations.
\newblock {\em Archive for Rational Mechanics and Analysis}, 124(1):1--12,
  1993.

\bibitem[Isa06]{isakov2006inverse}
Victor Isakov.
\newblock {\em Inverse problems for partial differential equations}, volume
  127.
\newblock Springer, 2006.

\bibitem[Isa07]{isakov2007uniqueness}
Victor Isakov.
\newblock On uniqueness in the inverse conductivity problem with local data.
\newblock {\em Inverse Problems and Imaging}, 1(1):95, 2007.

\bibitem[IUY10]{imanuvilov2010calderon}
Oleg Imanuvilov, Gunther Uhlmann, and Masahiro Yamamoto.
\newblock The {C}alder{\'o}n problem with partial data in two dimensions.
\newblock {\em Journal of the American Mathematical Society}, 23(3):655--691,
  2010.

\bibitem[IY13a]{imanuvilovyamamoto_semilinear}
Oleg Imanuvilov and Masahiro Yamamoto.
\newblock Unique determination of potentials and semilinear terms of semilinear
  elliptic equations from partial {C}auchy data.
\newblock {\em J. Inverse Ill-Posed Probl.}, 21(1):85--108, 2013.

\bibitem[IY13b]{imanuvilov2013uniqueness}
Oleg~Y Imanuvilov and Masahiro Yamamoto.
\newblock Uniqueness for inverse boundary value problems by
  {D}irichlet-to-{N}eumann map on subboundaries.
\newblock {\em Milan Journal of Mathematics}, 81(2):187--258, 2013.

\bibitem[KL13]{KL2013direct}
Andreas Kirsch and Xiaodong Liu.
\newblock Direct and inverse acoustic scattering by a mixed-type scatterer.
\newblock {\em Inverse Problems}, 29(6):065005, 2013.

\bibitem[KLOU14]{kurylev2014einstein}
Yaroslav Kurylev, Matti Lassas, Lauri Oksanen, and Gunther Uhlmann.
\newblock Inverse problem for einstein-scalar field equations.
\newblock {\em arXiv preprint arXiv:1406.4776}, 2014.

\bibitem[KLU18]{kurylev2018inverse}
Yaroslav Kurylev, Matti Lassas, and Gunther Uhlmann.
\newblock Inverse problems for {L}orentzian manifolds and non-linear hyperbolic
  equations.
\newblock {\em Inventiones mathematicae}, 212(3):781--857, 2018.

\bibitem[KN02]{kang2002identification}
Hyeonbae Kang and Gen Nakamura.
\newblock Identification of nonlinearity in a conductivity equation via the
  {D}irichlet-to-{N}eumann map.
\newblock {\em Inverse Problems}, 18(4):1079, 2002.

\bibitem[KP98]{KP1998recovering}
Andreas Kirsch and Lassi P{\"a}iv{\"a}rinta.
\newblock On recovering obstacles inside inhomogeneities.
\newblock {\em Mathematical methods in the applied sciences}, 21(7):619--651,
  1998.

\bibitem[KS14a]{kenig2014calderon}
Carlos Kenig and Mikko Salo.
\newblock The {C}alder{\'o}n problem with partial data on manifolds and
  applications.
\newblock {\em Analysis \& PDE}, 6(8):2003--2048, 2014.

\bibitem[KS14b]{kenig2014recent}
Carlos Kenig and Mikko Salo.
\newblock Recent progress in the {C}alder{\'o}n problem with partial data.
\newblock {\em Contemp. Math}, 615:193--222, 2014.

\bibitem[KSU07]{kenig2007calderon}
Carlos Kenig, Johannes Sj{\"o}strand, and Gunther Uhlmann.
\newblock The {C}alder{\'o}n problem with partial data.
\newblock {\em Annals of Mathematics}, 165(2):567--591, 2007.

\bibitem[KU19]{KU2019partial}
Katya Krupchyk and Gunther Uhlmann.
\newblock A remark on partial data inverse problems for semilinear elliptic
  equations.
\newblock {\em arXiv preprint arXiv:1905.01561}, 2019.

\bibitem[LL17]{LL2017recovery}
Hongyu Liu and Xiaodong Liu.
\newblock Recovery of an embedded obstacle and its surrounding medium from
  formally determined scattering data.
\newblock {\em Inverse Problems}, 33(6):065001, 2017.

\bibitem[LL19]{lai2019global}
Ru-Yu Lai and Yi-Hsuan Lin.
\newblock Global uniqueness for the fractional semilinear {S}chr{\"o}dinger
  equation.
\newblock {\em Proceedings of the American Mathematical Society},
  147(3):1189--1199, 2019.

\bibitem[LLLS19]{lassas2019nonlinear}
Matti Lassas, Tony Liimatainen, Yi-Hsuan Lin, and Mikko Salo.
\newblock Inverse problems for elliptic equations with power type
  nonlinearities.
\newblock {\em arXiv preprint arXiv:1903.12562}, 2019.

\bibitem[LLS18]{lassas2018poisson}
Matti Lassas, Tony Liimatainen, and Mikko Salo.
\newblock The {P}oisson embedding approach to the {C}alder\'on problem.
\newblock {\em arXiv preprint arXiv:1806.04954 (to appear Mathematische
  Annalen)}, 2018.

\bibitem[LUW17]{lassas2017determination}
Matti Lassas, Gunther Uhlmann, and Yiran Wang.
\newblock Determination of vacuum space-times from the {E}instein-{M}axwell
  equations.
\newblock {\em arXiv preprint arXiv:1703.10704}, 2017.

\bibitem[LUW18]{lassas2018inverse}
Matti Lassas, Gunther Uhlmann, and Yiran Wang.
\newblock Inverse problems for semilinear wave equations on {L}orentzian
  manifolds.
\newblock {\em Communications in Mathematical Physics}, 360:555--609, 2018.

\bibitem[LW07]{liwang_navierstokes}
Xiaosheng Li and Jenn-Nan Wang.
\newblock Determination of viscosity in the stationary {N}avier-{S}tokes
  equations.
\newblock {\em J. Differential Equations}, 242(1):24--39, 2007.

\bibitem[LZ06]{LZ2006uniqueness}
Hongyu Liu and Jun Zou.
\newblock Uniqueness in an inverse acoustic obstacle scattering problem for
  both sound-hard and sound-soft polyhedral scatterers.
\newblock {\em Inverse Problems}, 22(2):515, 2006.

\bibitem[LZ07]{LZ2007unique}
Hongyu Liu and Jun Zou.
\newblock On unique determination of partially coated polyhedral scatterers
  with far field measurements.
\newblock {\em Inverse Problems}, 23(1):297, 2007.

\bibitem[LZ08]{liu2008uniqueness}
Hongyu Liu and Jun Zou.
\newblock On uniqueness in inverse acoustic and electromagnetic obstacle
  scattering problems.
\newblock In {\em Journal of Physics: Conference Series}, volume 124, page
  012006. IOP Publishing, 2008.

\bibitem[LZ10]{LZ2010direct}
Xiaodong Liu and Bo~Zhang.
\newblock Direct and inverse obstacle scattering problems in a piecewise
  homogeneous medium.
\newblock {\em SIAM Journal on Applied Mathematics}, 70(8):3105--3120, 2010.

\bibitem[LZZ15]{LZZ2015determining}
Hongyu Liu, Hongkai Zhao, and Changjian Zou.
\newblock Determining scattering support of anisotropic acoustic mediums and
  obstacles.
\newblock {\em Communications in Mathematical Sciences}, 13(4):987, 2015.

\bibitem[MU]{munozuhlmann}
Claudio Munoz and Gunther Uhlmann.
\newblock The {C}alder{\'o}n problem for quasilinear elliptic equations.
\newblock {\em arXiv:1806.09586}.

\bibitem[Nac96]{nachman1996global}
Adrian~I Nachman.
\newblock Global uniqueness for a two-dimensional inverse boundary value
  problem.
\newblock {\em Annals of Mathematics}, pages 71--96, 1996.

\bibitem[O'D06]{OD2006inverse}
Stephen O'Dell.
\newblock Inverse scattering for the {L}aplace--{B}eltrami operator with
  complex electromagnetic potentials and embedded obstacles.
\newblock {\em Inverse problems}, 22(5):1579, 2006.

\bibitem[Ron03]{rondi2003unique}
Luca Rondi.
\newblock Unique determination of non-smooth sound-soft scatterers by finitely
  many far-field measurements.
\newblock {\em Indiana University mathematics journal}, pages 1631--1662, 2003.

\bibitem[Ron08]{rondi2008stable}
Luca Rondi.
\newblock Stable determination of sound-soft polyhedral scatterers by a single
  measurement.
\newblock {\em Indiana University Mathematics Journal}, pages 1377--1408, 2008.

\bibitem[SU87]{sylvester1987global}
John Sylvester and Gunther Uhlmann.
\newblock A global uniqueness theorem for an inverse boundary value problem.
\newblock {\em Annals of mathematics}, pages 153--169, 1987.

\bibitem[SU97]{sun1997inverse}
Ziqi Sun and Gunther Uhlmann.
\newblock Inverse problems in quasilinear anisotropic media.
\newblock {\em American Journal of Mathematics}, 119(4):771--797, 1997.

\bibitem[Sun96]{sun1996}
Ziqi Sun.
\newblock On a quasilinear inverse boundary value problem.
\newblock {\em Math. Z.}, 221(2):293--305, 1996.

\bibitem[Sun05]{sun2005}
Ziqi Sun.
\newblock Conjectures in inverse boundary value problems for quasilinear
  elliptic equations.
\newblock {\em Cubo}, 7(3):65--73, 2005.

\bibitem[Sun10]{sun2010inverse}
Ziqi Sun.
\newblock An inverse boundary-value problem for semilinear elliptic equations.
\newblock {\em Electronic Journal of Differential Equations (EJDE)[electronic
  only]}, 37:1--5, 2010.

\bibitem[SZ12]{salo2012inverse}
Mikko Salo and Xiao Zhong.
\newblock An inverse problem for the p-{L}aplacian: boundary determination.
\newblock {\em SIAM journal on mathematical analysis}, 44(4):2474--2495, 2012.

\bibitem[Uhl09]{uhlmann2009electrical}
Gunther Uhlmann.
\newblock Electrical impedance tomography and {C}alder{\'o}n's problem.
\newblock {\em Inverse problems}, 25(12):123011, 2009.

\bibitem[WZ19]{wang2016quadartic}
Yiran Wang and Ting Zhou.
\newblock Inverse problems for quadratic derivative nonlinear wave equations.
\newblock {\em Comm. PDE., to appear}, 2019.

\end{thebibliography}











\end{document}